\newtheorem{thm}{Theorem}[section]
\newtheorem{cor}[thm]{Corollary}
\newtheorem{prop}[thm]{Proposition}
\newtheorem{lem}[thm]{Lemma}
\newtheorem{conj}[thm]{Conjecture}
\newtheorem{ques}[thm]{Question}
\newtheorem{prob}[thm]{Problem}
\theoremstyle{definition}
\newtheorem{defn}[thm]{Definition}
\newtheorem{exmp}[thm]{Example}
\theoremstyle{remark}
\newtheorem{rem}[thm]{Remark}
\newtheorem{obs}[thm]{Observation}
\DeclareMathOperator{\cart}{\square}
\let\c@equation\c@thm
\numberwithin{equation}{section}
\newcommand*\bigcdot{\mathpalette\bigcdot@{.5}}
\newcommand*\bigcdot@[2]{\mathbin{\vcenter{\hbox{\scalebox{#2}{$\m@th#1\bullet$}}}}}
\def\subsection{\@startsection{subsection}{3}%
  \z@{.5\linespacing\@plus.7\linespacing}{.1\linespacing}%
  {\bfseries}}
\newcommand{\customfootnotetext}[2]{{% Group to localize change to footnote
  \renewcommand{\thefootnote}{#1}% Update footnote counter representation
  \footnotetext[0]{#2}}}% Print footnote text
\newcommand{\N} { \mathbb{N}}
\newcommand{\Z} { \mathbb{Z}}
\newcommand{\cut} { \backslash}
\@date \else {\vskip3ex \centering\footnotesize\@date\par\vskip1ex}\fi
\else \@footnotetext{\@setdate}\fi}
\DeclareMathOperator{\Cdot}{\bigcdot}
\title[Tightness of domination inequalities for direct product graphs]{Tightness of paired and upper domination inequalities for direct product graphs}
\author{Amanda Burcroff}
\address[]{Centre for Mathematical Sciences, University of Cambridge, Cambridge, UK}
\email{agb63@cam.ac.uk \textrm{ or } burcroff@umich.edu}
\begin{document}
\maketitle
\vspace{-1cm}
\begin{abstract}
    A set $D$ of vertices in a graph $G$ is called {\it dominating} if every vertex of $G$ is either in $D$ or adjacent to a vertex of $D$.  The {\it paired domination number} $\gamma_{\mathrm{pr}}(G)$ of $G$ is the minimum size of a dominating set whose induced subgraph admits a perfect matching, and the {\it upper domination number} $\Gamma(G)$ is the maximum size of a minimal dominating set.  In this paper, we investigate the sharpness of two multiplicative inequalities for these domination parameters, where the graph product is the direct product $\times$. 
    
    We show that for every positive constant $c$, there exist graphs $G$ and $H$ of arbitrarily large diameter such that $\gamma_{\mathrm{pr}}(G \times H) \leq c\gamma_{\mathrm{pr}}(G)\gamma_{\mathrm{pr}}(H)$, thus answering a question of Rall as well as two questions of Paulraja and Sampath Kumar. We then study when this inequality holds with $c = \frac{1}{2}$, in particular proving that it holds whenever $G$ and $H$ are trees. Finally, we demonstrate that the inequality $\Gamma(G \times H) \geq \Gamma(G) \Gamma(H)$, due to  Bre{\v s}ar, Klav{\v z}ar, and Rall, is tight.  
\end{abstract}
\vspace{-0.5cm}
\section{Introduction}
The interplay between domination parameters and graph products has been the subject of myriad studies, as surveyed in \cite[Chapter 28]{HIK} and \cite{NR}.  This most famously includes Vizing's conjecture involving the Cartesian product and the domination number \cite{Viz}.  In this paper, we focus on multiplicative inequalities involving the direct product and two variants of the domination number.

Let $G = (V(G),E(G))$ and $H = (V(H),E(H))$ be graphs with no isolated vertices.  The {\it direct product} of $G$ and $H$, denoted by $G \times H$, is the graph on $V(G) \times V(H)$ where vertices $(u_G, u_H)$ and $(v_G,v_H)$ are adjacent if and only if $\{u_G, v_G\} \in E(G)$ and  $\{u_H,v_H\} \in E(H)$.  A set $D \subseteq V(G)$ is called {\it dominating} if every vertex of $G$ is either in $D$ or adjacent to a vertex of $D$.

The first graph parameter we consider is the paired domination number, introduced by Haynes and Slater \cite{HS} in 1998.  The {\it paired domination number} $\gamma_{\mathrm{pr}}(G)$ of $G$ is the minimum size of a dominating set whose induced subgraph admits a perfect matching. Such a set is called a {\it paired dominating set}.  Haynes and Slater viewed paired domination as a model for assigning pairs of neighboring guards, acting as backups for each other, to adjacent vertices such that each vertex is ``watched by'', i.e., in the closed neighborhood of, a guard. 

 It is straightforward to show that the direct product of two paired dominating sets is a paired dominating set, hence $\gamma_{\mathrm{pr}}(G \times H) \leq \gamma_{\mathrm{pr}}(G)\gamma_{\mathrm{pr}}(H)$.  Bre{\v s}ar, Klav{\v z}ar, and Rall \cite{BKR} provided a sufficient condition for achieving equality involving packing numbers and the total domination number. They furthermore showed that the ratio of $\gamma_{\mathrm{pr}}(G \times H)$ to $\gamma_{\mathrm{pr}}(G)\gamma_{\mathrm{pr}}(H)$ can approach $\frac{1}{2}$ from above when $G$ and $H$ are both taken to be a subdivided star, i.e., a star graph where each edge is replaced by a path of length $2$ by adding a vertex of degree $2$.  They  proceeded to ask whether this ratio is greater than $\frac{1}{2}$ for all graphs $G$ and $H$.  In 2008, Rall \cite{Ral} generalized this question as follows.
 \begingroup
\renewcommand*{\theques}{\Alph{ques}}
\begin{ques}\emph{(\cite[Question 5]{Ral})}\label{ques: Rall}
Does there exist $c > 0$ such that $\gamma_{\mathrm{pr}}(G \times H) > c \gamma_{\mathrm{pr}}(G)\gamma_{\mathrm{pr}}(H)$ holds for all graphs $G$ and $H$?
\end{ques}
In 2010, Paulraja and Sampath Kumar \cite{PS} constructed some families of graphs $\{H_n\}_{n \in \N}$ with bounded paired domination number and diameter such that $\gamma_{\mathrm{pr}}(H_n \times H_n) = \frac{1}{2}\gamma_{\mathrm{pr}}(H_n)\gamma_{\mathrm{pr}}(H_n)$.  This answered the question of Bre{\v s}ar, Klav{\v z}ar, and Rall in the negative, but still left Question \ref{ques: Rall} open.  They furthermore asked the following related questions.

\begin{ques}\emph{(\cite[Problem 2.1]{PS})}\label{ques: PS1}
Do there exist graphs $G,H$ with diameter at least $4$ such that $\gamma_{\mathrm{pr}}(G \times H) \leq \frac{1}{2}\gamma_{\mathrm{pr}}(G)\gamma_{\mathrm{pr}}(H)$?
\end{ques}

\begin{ques}\emph{(\cite[Problem 2.2]{PS})}\label{ques: PS2}
Do there exist graphs $G,H$ and an integer $k \geq 8$ such that $\gamma_{\mathrm{pr}}(G), \gamma_{\mathrm{pr}}(H) \geq k$ and $\gamma_{\mathrm{pr}}(G \times H) \leq \frac{1}{2}\gamma_{\mathrm{pr}}(G)\gamma_{\mathrm{pr}}(H)$?
\end{ques}
\endgroup
\setcounter{thm}{0}

We answer Question \ref{ques: Rall} in the negative and give a positive answer to Questions \ref{ques: PS1} and \ref{ques: PS2} with the following result.  

\begin{thm}\label{thm: intro pair prod}
For every $c > 0$, there exists a connected graph $G$ of arbitrarily large diameter such that 
$$\gamma_{\mathrm{pr}}(G \times G) < c\gamma_{\mathrm{pr}}(G)^2\,.$$
\end{thm}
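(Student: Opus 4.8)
The plan is to exhibit an explicit family of connected graphs $G_n$ with $\mathrm{diam}(G_n)\to\infty$ for which the ratio $\gamma_{\mathrm{pr}}(G_n\times G_n)/\gamma_{\mathrm{pr}}(G_n)^2\to 0$; then for a given $c>0$ one simply takes $G=G_n$ with $n$ large. Since a paired dominating set is in particular a total dominating set, and since $\gamma_{\mathrm{pr}}$ is bounded above by twice the total domination number, it is cleaner to control total-domination-type quantities throughout. The engine driving everything is the elementary observation that in the direct product a single vertex $(x,y)$ dominates exactly the ``rectangle'' $N_G(x)\times N_G(y)$. Hence a (paired) dominating set of $G\times G$ amounts to a family of vertices $(x_i,y_i)$ whose rectangles $N_G(x_i)\times N_G(y_i)$ cover $V(G)\times V(G)$, subject to the chosen vertex set inducing a perfect matching. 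The problem thus reduces to making $\gamma_{\mathrm{pr}}(G_n)$ large while covering $V(G_n)\times V(G_n)$ by very few such rectangles.

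For the upper bound I would engineer $G_n$ so that its vertices have large, heavily overlapping neighborhoods, forcing each point of $V(G_n)$ to lie in many of the relevant neighborhoods $N_G(x_i)$. When every required rectangle $N_G(u)\times N_G(v)$ can be ``hit'' with high redundancy, a small selection of product vertices — chosen either by a probabilistic alteration argument or from an explicit covering design — already pierces all of them, yielding a dominating set of $G_n\times G_n$ of size $o(\gamma_{\mathrm{pr}}(G_n)^2)$. To upgrade this to a \emph{paired} dominating set at negligible cost, I would either arrange the selected set to be invariant under the coordinate-swapping involution (so that the natural product matching applies) or append one matched partner to each unmatched vertex, which inflates the size by at most a constant factor and hence leaves the $o(\cdot)$ conclusion intact.

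For the lower bound I would bound $\gamma_{\mathrm{pr}}(G_n)$ from below by the genuine cost of totally dominating $G_n$: covering all of its neighborhoods forces a transversal whose size I can make grow, say via a private-neighbor or covering-design obstruction. Finally the diameter must be pushed to infinity, and this is where I expect the main difficulty to lie. Large, overlapping neighborhoods are exactly what make the product cheap, but they also shrink the diameter, whereas large diameter is precisely what forces $\gamma_{\mathrm{pr}}(G_n)$ to be large; these two demands pull in opposite directions. Naive remedies, such as attaching a long path, reintroduce a grid-like $\Theta(\mathrm{diam}^2)$ cost into the product and drive the ratio back up to a positive constant. Reconciling large diameter with sub-quadratic rectangle covering — presumably through a layered or design-based construction whose neighborhoods are locally dense yet globally spread — together with the verification of the perfect-matching condition for the resulting small product set, is where the real work of the proof will be concentrated.
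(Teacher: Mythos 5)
Your proposal is a strategy sketch rather than a proof, and the gap you yourself flag at the end is exactly the part that never gets filled: you do not exhibit any concrete graph whose vertices have ``large, heavily overlapping neighborhoods'' yet whose paired domination number is large, and you leave the reconciliation with large diameter as ``where the real work will be concentrated.'' The paper resolves both points with one explicit family. Take $G_0=\times_{i=1}^t K_{n_i}$ with $t\geq 3$ and $n_i\geq 2t+1$; then $\gamma_{\mathrm{pr}}(G_0)\geq\gamma_{\mathrm{t}}(G_0)=t+1$ by a result of Meki{\v s}, while $G_0\times G_0$ is again a direct product of $2t$ large complete graphs and so has paired domination number only $2t+2$. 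This is precisely the ``locally dense, small transversal'' object you were hoping to engineer by probabilistic or design-theoretic means, and it already makes the ratio of order $1/t$ at diameter at most $3$.

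Moreover, the remedy you dismiss --- attaching a long path --- is exactly what the paper does to obtain large diameter, and your objection to it rests on a misreading of the quantifiers. The theorem asserts that for every $c>0$ and every diameter bound $D$ there is some $G$ with diameter at least $D$ and ratio below $c$; the graph may depend on both $c$ and $D$. So one fixes the path length $d\geq D$ first and only then lets $t\to\infty$. Lemma \ref{lem: add vertex} shows that each pendant vertex at most doubles the product's paired domination number plus an additive term $2\gamma_{\mathrm{pr}}(H)$, so after appending the path the upper bound has the form $4^d(d+2)t+O_d(1)$ --- expensive in $d$, but still linear in $t$ --- while $\gamma_{\mathrm{pr}}(G)^2\geq(t+1)^2$ is quadratic in $t$. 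The $\Theta(d^2)$ grid cost you worry about is absorbed into the constant for fixed $d$ and does not drive the ratio back up to a positive constant. (In fact, letting $d$ grow slowly with $t$, say with $4^d d=o(t)$, even yields a single sequence with diameter tending to infinity and ratio tending to zero, so the stronger formulation you aim for is also attainable by this route.) As it stands, your argument establishes nothing beyond the reduction to a covering problem; the construction that solves that covering problem, and the verification that the path attachment is harmless, are both missing.
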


The graphs we construct consist of certain direct products of complete graphs, each with a path appended.  The direct products of complete graphs, along with the related unitary Cayley graph of $\Z/n\Z$, have generated significant recent interest for their extremal properties in the domination chain \cite{AD, Bur, DI, Vem}.  

We then proceed to study the conditions under which graphs $G$ and $H$ satisfy the inequality
$\gamma_{\mathrm{pr}}(G \times H) \geq \frac{1}{2}\gamma_{\mathrm{pr}}(G)\gamma_{\mathrm{pr}}(H)$.  We show that this holds whenever $G$ and $H$ are trees, and more generally whenever their $3$-packing number coincides with their paired domination number.  This allows us to show that any pair of graphs $G', H'$ can appear as induced subgraphs of $G,H$, respectively, satisfying this inequality.

Next, we shift our focus to another domination parameter, the upper domination number.  A dominating set is called {\it minimal} if none of its proper subsets are dominating.  The {\it upper domination number} $\Gamma(G)$ of $G$ is the maximum size of a minimal dominating set in $G$.  

In 2007, Bre{\v s}ar, Klav{\v z}ar, and Rall \cite{BKR} proved that the Vizing-like inequality
\begin{equation}\label{eq: upper dom}
    \Gamma(G \times H) \geq \Gamma(G)\Gamma(H)
\end{equation}
holds for arbitrary graphs $G$ and $H$.  Similar inequalities involving the Cartesian product are proven in \cite{Bre, Chi}.  While Bre{\v s}ar, Klav{\v z}ar, and Rall were not able demonstrate that the bound in \eqref{eq: upper dom} is optimal, they suggested that equality may be achieved for the family of $2 \times n$ rook graphs, i.e.,  $G = H = K_2 \cart K_n$.   We prove in Section \ref{Upper Dom} that this is indeed the case for $n \geq 71$, thus establishing that \eqref{eq: upper dom} is tight.

In Section \ref{sec: prelims}, we provide the necessary preliminaries.   Section \ref{Paired Dom} contains our results involving paired domination on direct product graphs, including the proof of Theorem \ref{thm: intro pair prod}.  In  Section  \ref{Upper Dom}, we consider the upper domination number and prove that \eqref{eq: upper dom} is tight.    We conclude with several open questions in Section \ref{sec: further}.

\section{Preliminaries}\label{sec: prelims}
 A graph $G$ is a set of vertices $V(G)$ along with a set of undirected edges $E(G)$, excluding loops.  For any $U \subseteq V(G)$, the subgraph of $G$ induced by $U$, denoted $G[U]$, is the graph with vertex set $U$ and whose edge set is precisely the edge set $E(G)$ restricted to $U \times U$.  We denote the complete graph on $n$ vertices by $K_n$.   A {\it perfect matching} in a graph $G$ is a collection of pairs of vertices such that the vertices in a pair are adjacent and every vertex is contained in exactly one pair.  Alternatively, a perfect matching can be viewed as a collection of edges such that every vertex is adjacent to an edge in the collection and no two edges share a vertex.  The {\it diameter} of a graph $G$ is the maximum distance between any two vertices of $G$.
 
 Let $N(v)$ denote the {\it open neighborhood} of a vertex $v$, that is, the set of all vertices adjacent to $v$.  Let $N[v]$ denote the {\it closed neighborhood} of a vertex $v$, which is the open neighborhood of $v$ along with $v$ itself.  For $S \subseteq V(G)$, let $N[S] = \bigcup_{v \in S} N[v]$.  A vertex $u$ is a {\it private neighbor} of a vertex $v \in S \subseteq V(G)$ (with respect to $S$) if $u \in N[v]$ and $u \notin N[S \cut \{v\}]$.  Note that a vertex can be its own private neighbor.  Observe that a set $S \subseteq V(G)$ is dominating if and only if $N[S] = V(G)$, and furthermore a dominating set $S$ is minimal if and only if every $v \in S$ has a private neighbor.  
 
 The {\it Cartesian product} (sometimes called the {\it box product}) of two graphs $G$ and $H$, denoted by $G \cart H$, is the graph on vertex set $V(G) \times V(H)$ with vertex $(u_G,u_H)$ adjacent to $(v_G,v_H)$ if and only if either $u_G = v_G$ and $\{u_H,v_H\} \in E(H)$, or $\{u_G,v_G\} \in E(G)$ and $u_H = v_H$. When we take the product of multiple graphs simultaneously, denoted by $\times_{i=1}^t G_i$, the graph product used is always the direct product.  This definition follows associatively from the definition of the direct product of two graphs.  Identifying the vertex set of $K_{n_i}$ with the set $\{0,\dots,n_i - 1\}$ for $n_i \in \N$ and fixing $0 \leq a_i,b_i < n_i$, we then have that the vertex $(a_1,\dots,a_t)$ is adjacent to $(b_1,\dots,b_t)$ in $\times_{i=1}^t K_{n_i}$ if and only if $a_i \neq b_i$ for all $i \in \{1,\dots,t\}$.
 
 For a positive integer $k$, Meir and Moon \cite{MM} defined a $k$-packing of $G$ to be a set $P \subseteq V(G)$ such that every pair of distinct vertices $u,v \in P$ have distance greater than $k$.  Thus the notion of a $2$-packing is equivalent to a classical packing, and a $1$-packing is precisely an independent set.  The {\it $k$-packing number}, denoted by $\rho_k(G)$, is the order of the largest $k$-packing of $G$.  We will be particularly interested in the $3$-packing number.
 
 The graph parameters we study are variants of the classical domination number.  The {\it domination number} $\gamma(G)$ of a graph $G$ is the minimum size of a dominating set in $G$.  Another variant we make use of is the {\it total domination number} $\gamma_{\mathrm{t}}(G)$ of $G$, which is the minimum size of a dominating set in $G$ whose induced subgraph has no isolated vertices.  Such a set is called a {\it total dominating set}.  It is straightforward to show that $\gamma(G) \leq \gamma_{\mathrm{t}}(G) \leq \gamma_{\mathrm{pr}}(G)$ and $\gamma(G) \leq \Gamma(G)$ for any graph $G$ without isolated vertices.  
 
 Note that the paired domination number is not well defined on graphs with isolated vertices.  Thus we assume implicitly that whenever we consider the paired domination number of a graph, the graph has no isolated vertices.

%%%%%%%%%%%%%%%%%%%%%%%%%%%%%%%%
\section{Paired Domination and Direct Products}\label{Paired Dom}
In this section, we study lower bounds on the paired domination number of the direct product of two graphs in terms of the paired domination number of each graph.  In particular, we examine these inequalities on certain direct products of complete graphs. In doing so, we answer a question of Rall \cite{Ral} in the negative by showing that the ratio of $\gamma_{\mathrm{pr}}(G \times H)$ to $\gamma_{\mathrm{pr}}(G)\gamma_{\mathrm{pr}}(H)$ can be arbitrarily small.  We furthermore show that the previous statement can hold on families of graphs with arbitrarily large diameter and paired domination number, thus resolving the two questions of Paulraja and Sampath Kumar in \cite{PS}.

Meki{\v s} \cite{Mek} demonstrated the tightness of the inequality $\gamma(G \times H) \geq \gamma(G) + \gamma(H) - 1$ by evaluating the domination number of a direct product of complete graphs with order larger than the number of factors. His results are summarized in the following lemma.

\begin{lem}\emph{(\cite[Corollary 2.2]{Mek})}\label{lem: Mek tight}
Let $G = \times_{i=1}^t K_{n_i}$, where $t \geq 3$ and $n_i \geq t + 1$ for all $i$.  Then 
$$\gamma(G) = t+1 = \gamma_{\mathrm{t}}(G)\,.$$
\end{lem}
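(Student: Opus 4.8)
The plan is to prove both equalities by a sandwich argument. Since $\gamma(G) \le \gamma_{\mathrm{t}}(G)$ holds for every graph without isolated vertices (as noted in the preliminaries), it suffices to establish the lower bound $\gamma(G) \ge t+1$ and, separately, to exhibit a total dominating set of size $t+1$; the chain $t+1 \le \gamma(G) \le \gamma_{\mathrm{t}}(G) \le t+1$ then forces both parameters to equal $t+1$. Throughout I identify $V(K_{n_i})$ with $\{0,\dots,n_i-1\}$ as in the preliminaries, so that $(a_1,\dots,a_t)$ and $(b_1,\dots,b_t)$ are adjacent exactly when $a_i \ne b_i$ for every $i$. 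Equivalently, a vertex $w$ fails to be dominated by a vertex $v$ precisely when $w \ne v$ and $w$ agrees with $v$ in at least one coordinate.

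For the upper bound I would take the $t+1$ constant vertices $u^{(k)} = (k,k,\dots,k)$ for $k = 0,1,\dots,t$, which are legitimate vertices because $n_i \ge t+1$ for all $i$. Any two distinct $u^{(k)}, u^{(k')}$ differ in every coordinate and are therefore adjacent, so $G[\{u^{(0)},\dots,u^{(t)}\}]$ is a clique and in particular has no isolated vertex. To see that this set dominates, note that an arbitrary vertex $w = (w_1,\dots,w_t)$ uses at most $t$ distinct values among its $t$ coordinates; since $k$ ranges over the $t+1$ values $\{0,\dots,t\}$, pigeonhole supplies a $k^{*}$ with $w_i \ne k^{*}$ for all $i$, whence $w$ is adjacent to $u^{(k^{*})}$ and hence dominated. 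This yields a total dominating set of size $t+1$.

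For the lower bound I would argue by contradiction, supposing a dominating set $D$ with $|D| \le t$; enlarging $D$ if necessary (a superset of a dominating set still dominates, and the vertex set is abundant), I may assume $D = \{v^{(1)},\dots,v^{(t)}\}$ has exactly $t$ vertices, writing $v^{(j)} = (v^{(j)}_1,\dots,v^{(j)}_t)$. The key device is the diagonal witness $w^{*} = (v^{(1)}_1, v^{(2)}_2, \dots, v^{(t)}_t)$, which by construction agrees with each $v^{(j)}$ in coordinate $j$ and is therefore adjacent to none of them. If $w^{*} \notin D$, then $w^{*}$ is an undominated vertex, contradicting that $D$ dominates.

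The main obstacle is the degenerate possibility that $w^{*}$ itself lies in $D$, say $w^{*} = v^{(j_0)}$. Here I would perturb a single coordinate: replace $w^{*}_{j_0}$ by a value $x$ avoiding the at most $t$ forbidden values $\{v^{(j)}_{j_0} : 1 \le j \le t\}$, which is possible precisely because $n_{j_0} \ge t+1$. The resulting vertex $w'$ differs from every $v^{(j)}$ in coordinate $j_0$, so $w' \notin D$; it still agrees with each $v^{(j)}$ for $j \ne j_0$ in coordinate $j$; and, since $t \ge 2$, it agrees with $v^{(j_0)} = w^{*}$ in any coordinate other than $j_0$. Thus $w'$ is undominated, the desired contradiction. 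It is worth emphasizing that the hypothesis $n_i \ge t+1$ is used essentially in both directions — to make room for the $t+1$ constant vertices and to free up the perturbation value $x$ — which is exactly the regime in which this sharp value of the domination number appears.
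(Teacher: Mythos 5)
Your proof is correct. The paper does not prove this lemma itself --- it is quoted from Meki{\v s} \cite[Corollary 2.2]{Mek} --- but your argument is essentially the standard one: the upper bound via the constant tuples $(k,\dots,k)$, $0 \le k \le t$, is exactly the dominating set $D$ that the paper reuses in the proof of Lemma \ref{lem: match dom prod com}, and the lower bound via the diagonal witness $(v^{(1)}_1,\dots,v^{(t)}_t)$, with a single-coordinate perturbation in the degenerate case where the witness lands in $D$ (using $n_{j_0} \ge t+1$ to find a fresh value), is the same pigeonhole device used in Meki{\v s}'s original proof. Both directions are handled completely, so this stands as a valid self-contained justification of the cited result.
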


Using a construction similar to that used by Meki{\v s}, we can compute the paired domination number of products of complete graphs under the conditions of Lemma \ref{lem: Mek tight}.

\begin{lem}\label{lem: match dom prod com}
Let $G = \times_{i=1}^t K_{n_i}$, where $t \geq 3$ and $n_i \geq t + 1$ for all $i$. Then 
$$\gamma_{\mathrm{pr}}(G) = \begin{cases} t + 1 & \text{ if $t$ is odd; } \\ t + 2 &\text{ if $t$ is even. } \end{cases}$$
\end{lem}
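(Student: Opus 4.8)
The plan is to sandwich $\gamma_{\mathrm{pr}}(G)$ between the lower bound coming from Lemma~\ref{lem: Mek tight} together with a parity argument, and an explicit upper bound coming from a construction modeled on Meki{\v s}'s. For the lower bound, every paired dominating set is in particular a total dominating set, so Lemma~\ref{lem: Mek tight} gives $\gamma_{\mathrm{pr}}(G) \ge \gamma_{\mathrm{t}}(G) = t+1$. Since a paired dominating set carries a perfect matching, its cardinality is even. When $t$ is odd, $t+1$ is even and no further improvement is forced; when $t$ is even, $t+1$ is odd, so the bound sharpens to $\gamma_{\mathrm{pr}}(G) \ge t+2$. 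It then remains to build paired dominating sets of size $t+1$ (odd $t$) and $t+2$ (even $t$).

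For the construction, I would identify each $V(K_{n_i})$ with $\{0,\dots,n_i-1\} \supseteq \{0,\dots,t\}$. For $k \in \{0,\dots,t-1\}$ let $c^{(k)}$ be the vertex whose $i$-th coordinate is the residue of $i+k$ modulo $t$ lying in $\{0,\dots,t-1\}$, and let $d = (t,\dots,t)$. Any two distinct $c^{(k)},c^{(k')}$ differ in every coordinate, and $d$ differs from each $c^{(k)}$ in every coordinate, so $D_0 = \{c^{(0)},\dots,c^{(t-1)},d\}$ induces a copy of $K_{t+1}$. To check domination, take $v = (b_1,\dots,b_t)$: if no $b_i$ equals $t$ then $v$ differs from $d$ everywhere and is dominated by $d$; otherwise some coordinate equals $t$. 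Since each $c^{(k)}$ has all entries in $\{0,\dots,t-1\}$, a coordinate $i$ can force agreement $b_i = c^{(k)}_i$ for at most one $k$, and only when $b_i \in \{0,\dots,t-1\}$. A coordinate equal to $t$ blocks no shift, so at most $t-1$ of the $t$ shifts are blocked and some $c^{(k)}$ differs from $v$ everywhere. Thus $D_0$ is a dominating set of size $t+1$.

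When $t$ is odd, $K_{t+1}$ admits a perfect matching, so $D_0$ is already a paired dominating set and $\gamma_{\mathrm{pr}}(G) \le t+1$. When $t$ is even, the $t$ mutually adjacent vertices $c^{(0)},\dots,c^{(t-1)}$ can be matched among themselves, leaving $d$ without a partner; I would adjoin $w = (0,\dots,0)$, which is adjacent to $d$, distinct from every $c^{(k)}$, and harmless to domination (adding vertices only helps). Matching the $c^{(k)}$ in pairs and pairing $d$ with $w$ gives a perfect matching of $D_0 \cup \{w\}$, a paired dominating set of size $t+2$, so $\gamma_{\mathrm{pr}}(G) \le t+2$.

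The main obstacle is precisely the even case: the parity constraint forces cardinality $t+2$ rather than $t+1$, and the clean single-clique picture cannot be pushed to $K_{t+2}$ under the hypothesis $n_i \ge t+1$ alone, since a vertex adjacent to every $c^{(k)}$ and to $d$ would require a coordinate value exceeding $t$. The fix is to drop the clique requirement and simply pair $d$ with a vertex outside the clique, using that domination is monotone under adding vertices; verifying that $(0,\dots,0)$ is distinct from all shifts and adjacent to $d$ is then routine, completing both inequalities.
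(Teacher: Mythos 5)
Your proposal is correct and follows essentially the same route as the paper: the lower bound comes from $\gamma_{\mathrm{pr}}(G)\ge\gamma_{\mathrm{t}}(G)=t+1$ (Lemma~\ref{lem: Mek tight}) sharpened by parity when $t$ is even, and the upper bound from an explicitly dominating clique of size $t+1$, augmented by one extra matched vertex in the even case. The only difference is cosmetic: the paper uses the diagonal $\{(0,\dots,0),(1,\dots,1),\dots,(t,\dots,t)\}$ (whose domination is immediate by pigeonhole on the $t+1$ values versus $t$ coordinates), whereas you use cyclic shifts plus $(t,\dots,t)$, which works equally well.
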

\begin{proof}
Let $D = \{(0,\dots,0), (1,\dots,1),\dots,(t,\dots,t)\}$, and observe that $D$ is a dominating set.  If $t$ is odd, then the subgraph induced by $D$ is a complete graph of even size, and thus admits a perfect matching.  Hence $\gamma_{\mathrm{pr}}(G) \leq t+1$. By Lemma \ref{lem: Mek tight}, we have $\gamma_{\mathrm{pr}}(G) \geq \gamma_{\mathrm{t}}(G) = t + 1$, and the desired equality follows.

If $t$ is even, then $\gamma_{\mathrm{t}}(G) = t+1$ is odd, hence $\gamma_{\mathrm{pr}}(G) \geq \gamma_{\mathrm{t}}(G) + 1 = t + 2$. Consider the dominating set $D' = D \cup \{(1,0,\dots,0)\}$ of size $t+2$.  The set $D'$ admits a perfect matching where vertex $(2i,\dots,2i)$ is paired with vertex $(2i + 1, \dots, 2i + 1)$ for $0 \leq i \leq \frac{t-2}{2}$, and vertex $(t,t,\dots,t)$ is paired with vertex $(1,0,\dots,0)$.  We can conclude in this case that $\gamma_{\mathrm{pr}}(G) = t+2$.
\end{proof}

This calculation can be used to show that $\gamma_{\mathrm{pr}}$ exhibits additive behavior on certain direct products of complete graphs.  However, a direct product of complete graphs is either disconnected or has diameter at most $3$. In order to prove that this additive behavior extends to connected graphs with arbitrarily large diameter, we first investigate how the addition of new vertex of degree one can affect the paired domination number of a direct product of graphs.

\begin{lem}\label{lem: add vertex}
Let $G$ and $H$ be connected graphs.  Let $G'$ be obtained from $G$ by adding a new vertex $v'$ and attaching it via a single edge to some $v \in V(G)$.  Then 
$$\gamma_{\mathrm{pr}}(G' \times H) \leq 2\left(\gamma_{\mathrm{pr}}(G \times H) + \gamma_{\mathrm{pr}}(H)\right)\,.$$
\end{lem}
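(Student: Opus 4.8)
The plan is to build an ordinary dominating set of $G' \times H$ of size at most $\gamma_{\mathrm{pr}}(G \times H) + \gamma_{\mathrm{pr}}(H)$ and then pass to a paired dominating set using the standard bound $\gamma_{\mathrm{pr}}(\Gamma) \le 2\gamma(\Gamma)$, valid for any graph $\Gamma$ without isolated vertices \cite{HS}. The whole point is that the extra vertex $v'$ interacts with $H$ in a very restricted way: since $v'$ is adjacent only to $v$ in $G'$, a vertex $(v',h)$ of the new ``column'' $\{v'\} \times V(H)$ has, in $G' \times H$, closed neighborhood meeting $\{(v,h') : h' \in N(h)\}$ and nothing outside column $v$. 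Thus this entire column can be dominated from column $v$, provided we place there the image of a \emph{total} dominating set of $H$.

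Concretely, I would first record that $G' \times H$ has no isolated vertices, as $G'$ and $H$ each have none, so $\gamma_{\mathrm{pr}}(G' \times H)$ is well defined. Let $D_1$ be a minimum dominating set of $G \times H$, so that $|D_1| = \gamma(G \times H) \le \gamma_{\mathrm{pr}}(G \times H)$, and let $T \subseteq V(H)$ be a minimum total dominating set of $H$, so that $|T| = \gamma_{\mathrm{t}}(H) \le \gamma_{\mathrm{pr}}(H)$. I then take
$$S = D_1 \cup (\{v\} \times T).$$
To see that $S$ dominates $G' \times H$, note that any vertex $(g,h)$ with $g \in V(G)$ satisfies $N[(g,h)] \supseteq N_{G \times H}[(g,h)]$ in $G' \times H$ (the neighborhoods only grow, since $G$ is an induced subgraph of $G'$), so it is dominated by $D_1$; and any vertex $(v',h)$ of the new column is adjacent to $(v,h')$ for some $h' \in T$, because $T$ totally dominates $H$. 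Hence $S$ is dominating and $|S| \le \gamma_{\mathrm{pr}}(G \times H) + \gamma_{\mathrm{pr}}(H)$.

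Finally, since $S$ is dominating we have $\gamma(G' \times H) \le |S|$, and the chain
$$\gamma_{\mathrm{pr}}(G' \times H) \le 2\gamma(G' \times H) \le 2|S| \le 2\bigl(\gamma_{\mathrm{pr}}(G \times H) + \gamma_{\mathrm{pr}}(H)\bigr)$$
gives the claim. I expect the only genuine content to be the domination check for the pendant column: one must observe that the restricted neighborhood of $(v',h)$ forces its domination to come from column $v$, which is exactly why a total (rather than merely ordinary) dominating set of $H$ is needed there. The factor of $2$ then comes essentially for free from the paired-to-ordinary domination bound, the one hypothesis to confirm being the absence of isolated vertices in $G' \times H$.
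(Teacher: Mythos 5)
Your proposal is correct and follows essentially the same route as the paper: build a dominating set of $G' \times H$ of size at most $\gamma_{\mathrm{pr}}(G \times H) + \gamma_{\mathrm{pr}}(H)$ by placing a total dominating set of $H$ in the $v$-column to cover the pendant column $\{v'\} \times V(H)$, then apply $\gamma_{\mathrm{pr}} \le 2\gamma$. The only cosmetic difference is that you start from a minimum dominating set of $G \times H$ and a minimum total dominating set of $H$ rather than from paired dominating sets, but since you bound both by the corresponding paired domination numbers, the argument is the same.
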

\begin{proof}
Let $D$ be a paired dominating set of $G \times H$ and $D_H$ be a paired dominating set of $H$.  Then define
$$D' = D \cup (\{v\} \times D_H) \subseteq V(G' \times H)\,,$$
where $v$ is the unique vertex adjacent to $v'$.
We claim that $D'$ is a dominating set of $G' \times H$.  To see that $D'$ is dominating, note that we need only check that vertices of the form $\{v'\} \times V(H)$ are dominated, as the set $D$ dominates the remaining vertices.  Since $D_H$ is a paired dominating set of $V(H)$, hence a total dominating set of $V(H)$, and $v$ is adjacent to $v'$, the vertices in $\{v\} \times D_H$ dominate $\{v'\} \times V(H)$. Thus $D'$ is a dominating set of $G' \times H$ of size at most $\gamma_{\mathrm{pr}}(G \times H) + \gamma_{\mathrm{pr}}(H)$.  

It is well known that $\gamma_{\mathrm{pr}}(L) \leq 2\gamma(L)$ for any graph $L$ (see, for example, \cite[Theorem 9]{HS}).  Combining this with the inequality $\gamma(G' \times H) \leq \gamma_{\mathrm{pr}}(G \times H) + \gamma_{\mathrm{pr}}(H)$, we can conclude that the desired inequality holds.
\end{proof}

We now define an operation on vertex-transitive graphs (including the direct products of complete graphs) that appends a path to an arbitrary vertex, thus increasing the diameter.
\begin{defn}\label{defn: append path}
Suppose $G$ is a vertex-transitive graph and $\ell$ is a natural number. Let $G^{\,\Cdot \ell}$ be the graph obtained by joining a path on $\ell$ vertices to $G$ with a bridge (connected to a vertex of degree $1$ on the path). We furthermore define $G^{\,\Cdot 0} = G$.
\end{defn}

\begin{figure}[h]
\begin{center}
\includegraphics[width=.9\linewidth]{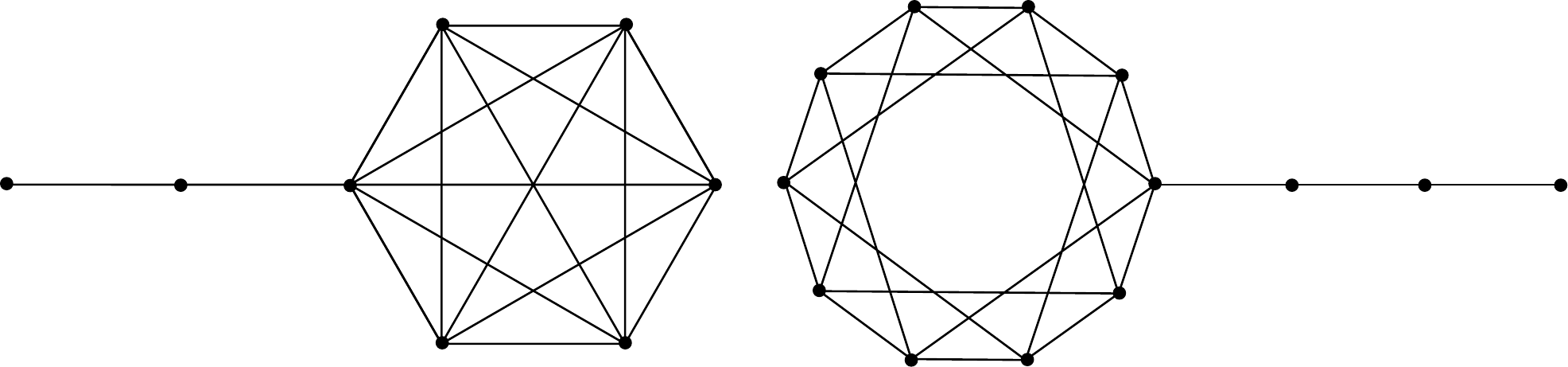}
\caption{Illustrating the construction of Definition \ref{defn: append path}, this figure depicts the graph $(K_6)^{\,\Cdot 2}$ on the left and $(K_2 \times K_5)^{\,\Cdot 3}$ on the right.\textsuperscript{$\dagger$}}
\label{Fig1}
\end{center}  
\end{figure}
\customfootnotetext{$\dagger$}{The graph $(K_m)^{\,\Cdot n}$ is the well-studied \emph{$(m,n)$-lollipop graph}. If $n_1,\dots,n_t$ are distinct primes and $n = n_1\cdots n_t$, then the graph $\times_{i=1}^t K_{n_i}$ is the \emph{unitary Cayley graph of $\Z/n\Z$}.  Thus graphs of the form $\left(\times_{i=1}^t K_{n_i}\right)^{\,\Cdot \ell}$ where the $n_i$ are distinct primes may be referred to as \emph{Cayleypops}.}

\begin{rem}
Note that the vertex-transitivity of $G$ guarantees that this construction is well-defined up to graph isomorphism. This operation may also be viewed as a particular {\it coalescence} of the graph $G$ with a path on $\ell + 1$ vertices.
\end{rem}

\begin{obs}\label{obs: pr not dec}
For any graph $G$ and $\ell \in \N$, the paired domination number of $G^{\,\Cdot \ell}$ is at least that of $G$.  Given a paired dominating set $P$ of $G^{\,\Cdot \ell}$, let $P'$ be its restriction to $G$.  Then $P'$ dominates every vertex of $G$ except at most one, where the path was attached.  It can be checked by casework that if one or two vertices need to be added to $P'$ in order to obtain a paired dominating set of $G$, then at least one or two vertices, respectively, of $P$ were contained in the path of length $\ell$.  Thus this yields a paired dominating set of $G$ of size at most $|P|$.
\end{obs}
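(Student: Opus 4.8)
The plan is to take a minimum paired dominating set $P$ of $G^{\,\Cdot \ell}$, together with a perfect matching $M$ of the subgraph it induces, and surgically repair its restriction to $G$ into a paired dominating set of $G$ of size at most $|P|$; this immediately gives $\gamma_{\mathrm{pr}}(G) \le |P| = \gamma_{\mathrm{pr}}(G^{\,\Cdot \ell})$. Write the appended path as $p_1 p_2 \cdots p_\ell$ with $p_1$ joined to $v \in V(G)$, and set $P_G = P \cap V(G)$ and $k = |P| - |P_G|$, the number of path vertices in $P$. The two structural facts I would establish first are: (i) $P_G$ dominates every vertex of $G$ except possibly $v$, because any $u \neq v$ satisfies $N_{G^{\,\Cdot \ell}}[u] = N_G[u] \subseteq V(G)$, so its dominator in $P$ already lies in $P_G$; and (ii) in $M$ every vertex of $P_G$ is matched inside $G$, with the sole exception that $v$ may be matched to $p_1$, since $v$ is the only vertex of $G$ adjacent to a path vertex.

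With these in hand, the key observation is a clean dichotomy: the only ways $P_G$ can fail to be a paired dominating set of $G$ are (A) $v \notin P_G$ and no neighbor of $v$ lies in $P_G$ (domination fails only at $v$), or (B) $v \in P_G$ but $v$ is matched to $p_1$ in $M$ (the matching fails only at $v$). These cases are mutually exclusive, since (A) requires $v \notin P_G$ and (B) requires $v \in P_G$, and if neither occurs then $P_G$ is already a paired dominating set of $G$, finishing with room to spare.

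In case (A), $v$ can only be dominated in $G^{\,\Cdot \ell}$ through $p_1$, forcing $p_1 \in P$; since $v \notin P$, the matching partner of $p_1$ must be $p_2$, so $k \ge 2$. I would then adjoin a fresh edge $\{v,w\}$ with $w \in N_G(v)$ --- both endpoints lying outside $P_G$ by hypothesis --- to obtain a paired dominating set of $G$ of size $|P_G| + 2 \le |P_G| + k = |P|$. In case (B), $p_1 \in P$ already gives $k \ge 1$; if $v$ has a neighbor $w \notin P_G$, I adjoin $w$ and match it to $v$, yielding size $|P_G| + 1 \le |P|$.

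The step I expect to be the genuine obstacle is the residual subcase of (B) in which every neighbor of $v$ already lies in $P_G$, so that no fresh vertex is available to pair with $v$ and augmentation fails; here one must instead \emph{delete} $v$. I would argue that $P_G \setminus \{v\}$ is still dominating --- any vertex outside $N[v]$ was never dominated by $v$, while $v$ and each of its neighbors is dominated by a neighbor of $v$, all of which lie in $P_G \setminus \{v\}$ --- and that it is perfectly matched by $M$ restricted to $G$, giving size $|P_G| - 1 \le |P|$. Along the way I would record the minor degenerate checks that $G$ has an edge (so the neighbors invoked exist and $P_G \setminus \{v\}$ is nonempty) and that case (A) forces $\ell \ge 2$ (otherwise $p_1$ would have no available matching partner), both of which are immediate.
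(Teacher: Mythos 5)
Your proposal is correct and follows the same strategy as the paper's own justification: restrict $P$ to $G$ and locally repair the restriction near the attachment vertex, charging any added vertices against path vertices of $P$. In fact your casework is more complete than the paper's sketch, which only speaks of \emph{adding} one or two vertices and does not explicitly address the subcase you rightly flag as the real obstacle (where $v$ is matched to $p_1$ and all of $N_G(v)$ already lies in $P_G$, so one must delete $v$ rather than augment); your treatment of that subcase is sound.
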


\begin{lem}\label{lem: prod pops}
Fix $t \geq 3$ and $n_i \geq 2t + 1$.  For nonnegative integers $a$ and $b$, we have 
$$\gamma_{\mathrm{pr}}\left(\left(\times_{i=1}^t K_{n_i}\right)^{\Cdot a} \times \left(\times_{i=1}^t K_{n_i}\right)^{\Cdot b}\right) \leq  2^{a + b}\left((a + 2)t + 2a + 2\right) + 2^bb(t + a + 2)\,.$$
\end{lem}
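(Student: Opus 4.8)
The plan is to prove the stated inequality by a double induction driven entirely by Lemma \ref{lem: add vertex}, peeling off one appended path at a time. Write $G = \times_{i=1}^t K_{n_i}$ and $f(a,b) = \gamma_{\mathrm{pr}}(G^{\,\Cdot a} \times G^{\,\Cdot b})$, and let $T(a,b)$ denote the right-hand side of the claimed bound. I would establish $f(a,b) \le T(a,b)$ in two stages, fed by three ingredients.

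First, the base value: since $G \times G = \times_{i=1}^{2t} K_{n_i}$ is a direct product of $2t \ge 6$ complete graphs, each of order at least $2t+1$, Lemma \ref{lem: match dom prod com} applied with an even number of factors gives $\gamma_{\mathrm{pr}}(G \times G) = 2t+2$, which is exactly $T(0,0)$. Second, Lemma \ref{lem: match dom prod com} gives $\gamma_{\mathrm{pr}}(G) \le t+2$. Third, and this is the crux, I need the auxiliary estimate $\gamma_{\mathrm{pr}}(G^{\,\Cdot \ell}) \le t + \ell + 2$ for every $\ell \ge 0$. To prove it I would use the vertex-transitivity of $G$ to select a minimum paired dominating set $D_0$ of $G$ containing the attachment vertex $v$; then $D_0$ dominates all of $G$ together with the first path vertex, and it remains only to paired-dominate the residual path on $\ell - 1$ vertices, which costs at most $\ell$ further vertices. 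Since $|D_0| \le t+2$, this yields $\gamma_{\mathrm{pr}}(G^{\,\Cdot \ell}) \le t + \ell + 2$.

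With these in hand, the first stage handles the slice $b = 0$, namely $f(a,0) \le T(a,0)$, by induction on $a$. Growing the path on the first factor by one vertex, Lemma \ref{lem: add vertex} with $H = G$ gives $f(a,0) \le 2 f(a-1,0) + 2\gamma_{\mathrm{pr}}(G) \le 2 f(a-1,0) + 2(t+2)$. One checks the elementary identity $T(a,0) - 2 T(a-1,0) = 2^a(t+2)$, and since $2(t+2) \le 2^a(t+2)$ for $a \ge 1$, the induction closes. The second stage fixes $a$ and inducts on $b$ from this base: growing the path on the second factor and applying Lemma \ref{lem: add vertex} with $H = G^{\,\Cdot a}$ gives $f(a,b) \le 2 f(a,b-1) + 2\gamma_{\mathrm{pr}}(G^{\,\Cdot a}) \le 2 f(a,b-1) + 2(t+a+2)$ by the auxiliary estimate. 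One then verifies $T(a,b) - 2 T(a,b-1) = 2^b(t+a+2)$ and uses $2(t+a+2) \le 2^b(t+a+2)$ for $b \ge 1$ to complete the induction.

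The main obstacle is the auxiliary bound $\gamma_{\mathrm{pr}}(G^{\,\Cdot \ell}) \le t + \ell + 2$: the additive constant $t+2$ (rather than the $t+4$ a naive pendant argument would produce) is exactly what makes both induction steps fit $T$, so the construction must exploit that a minimum paired dominating set of $G$ may be taken through $v$, sparing the two vertices one would otherwise spend attaching the path. Everything else reduces to the two elementary recursions for $T$ displayed above, together with the observation that the factor $2^b$ (respectively $2^a$) provides ample slack to absorb the constant increments coming from Lemma \ref{lem: add vertex}; this slack is also the reason the stated bound, while valid, is far from tight.
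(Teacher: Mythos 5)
Your argument is correct and is essentially the paper's own proof: both establish $\gamma_{\mathrm{pr}}(G\times G)=2t+2$ and $\gamma_{\mathrm{pr}}(G^{\,\Cdot \ell})\le t+\ell+2$ and then peel off the appended path vertices one at a time via Lemma \ref{lem: add vertex}, first on one factor (with $H=G$) and then on the other (with $H=G^{\,\Cdot a}$); your explicit two-stage induction with the telescoping identities $T(a,0)-2T(a-1,0)=2^a(t+2)$ and $T(a,b)-2T(a,b-1)=2^b(t+a+2)$ is just the unrolled form of the paper's iterated bound $\gamma_{\mathrm{pr}}(G^{\,\Cdot \ell}\times H)\le 2^\ell(\gamma_{\mathrm{pr}}(G\times H)+\ell\,\gamma_{\mathrm{pr}}(H))$. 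The only addition is your justification of $\gamma_{\mathrm{pr}}(G^{\,\Cdot \ell})\le t+\ell+2$ via vertex-transitivity, which the paper asserts without proof.
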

\begin{proof}
Let $G = \times_{i=1}^t K_{n_i}$.  Note that Lemma \ref{lem: match dom prod com} implies $\gamma_{\mathrm{pr}}(G) \leq t + 2$ and $\gamma_{\mathrm{pr}}(G \times G) = 2t + 2$. Since $G^{\,\Cdot \ell}$ is obtained from $G$ by adding a path of length $\ell$, we have $\gamma_{\mathrm{pr}}(G^{\,\Cdot \ell}) \leq t + \ell + 2$.  Repeated applications of Lemma \ref{lem: add vertex} show that 
$$\gamma_{\mathrm{pr}}(G^{\,\Cdot \ell} \times H) \leq 2^\ell\left(\gamma_{\mathrm{pr}}(G \times H) + \ell\cdot\gamma_{\mathrm{pr}}(H)\right)\,.$$

In particular, we have
$$\gamma_{\mathrm{pr}}(G^{\,\Cdot a} \times G) \leq 2^a\left(\gamma_{\mathrm{pr}}(G \times G) + a\cdot\gamma_{\mathrm{pr}}(G)\right) \leq 2^a\left((a + 2)t + 2a + 2\right)\,.$$
Using this calculation, we can similarly bound
\begin{align*}
    \gamma_{\mathrm{pr}}(G^{\,\Cdot a} \times G^{\,\Cdot b}) &=  \gamma_{\mathrm{pr}}(G^{\,\Cdot b} \times G^{\,\Cdot a})\\
    &\leq 2^{b}\left(\gamma_{\mathrm{pr}}(G \times G^{\,\Cdot a}) + b\cdot \gamma_{\mathrm{pr}}(G^{\,\Cdot a})\right)\\
    &\leq 2^{b}\left(2^a\left((a + 2)t + 2a + 2\right) + b(t + a + 2)\right)\\
    &= 2^{a + b}\left((a + 2)t + 2a + 2\right) + 2^bb(t + a + 2)
\end{align*}
as desired.
\end{proof}

Using this calculation along with previous lemmas, we now proceed to prove Theorem \ref{thm: intro pair prod}.  

\begin{proof}[Proof of Theorem \ref{thm: intro pair prod}]
Fix $d,t,n_1,\dots,n_t \in \N$ such that $d < 2t < n_1,\dots,n_t$.  Consider the graph $\left(\times_{i=1}^t K_{n_i}\right)^{\Cdot d}$.  By Lemma \ref{lem: prod pops} we have
\begin{align*}
    \gamma_{\mathrm{pr}}\left(\left(\times_{i=1}^t K_{n_i}\right)^{\Cdot d} \times \left(\times_{i=1}^t K_{n_i}\right)^{\Cdot d}\right)&\leq  2^{2d}\left((d + 2)t + 2d + 2\right) + 2^dd(t + d + 2)\\
    &=\left(4^d(d + 2) + 2^dd\right)t + 4^d(2d + 2) + 2^dd(d + 2)   \,.
\end{align*}
On the other hand, Observation \ref{obs: pr not dec} and Lemma \ref{lem: prod pops} imply 
$$\gamma_{\mathrm{pr}}\left(\left(\times_{i=1}^t K_{n_i}\right)^{\Cdot d}\right) \geq \gamma_{\mathrm{pr}}\left(\times_{i=1}^t K_{n_i}\right)\geq t + 1\,.$$  Hence $$\gamma_{\mathrm{pr}}\left(\left(\times_{i=1}^t K_{n_i}\right)^{\Cdot d}\right)^2 \geq (t + 1)^2\,.$$

Note that $  \gamma_{\mathrm{pr}}\left(\left(\times_{i=1}^t K_{n_i}\right)^{\Cdot d} \times \left(\times_{i=1}^t K_{n_i}\right)^{\Cdot d}\right)$ is linear in $t$ while $\gamma_{\mathrm{pr}}\left(\left(\times_{i=1}^t K_{n_i}\right)^{\Cdot d}\right)^2$ is a quadratic in $t$ with positive coefficients.  Thus for any $c > 0$, $d \in \N$, and sufficiently large $t$, choosing $n_1,\dots,n_t > 2t$, we have 
$$\frac{\gamma_{\mathrm{pr}}\left(\left(\times_{i=1}^t K_{n_i}\right)^{\Cdot d} \times \left(\times_{i=1}^t K_{n_i}\right)^{\Cdot d}\right)}{\gamma_{\mathrm{pr}}\left(\left(\times_{i=1}^t K_{n_i}\right)^{\Cdot d}\right)^2} < c\,.$$
Clearly by construction, the diameter of $\gamma_{\mathrm{pr}}\left(\left(\times_{i=1}^t K_{n_i}\right)^{\Cdot d}\right)$ is at least $d$.  Therefore taking $G = \left(\times_{i=1}^t K_{n_i}\right)^{\Cdot d}$ for arbitrarily large $d$ and sufficiently large $t$ (depending on $c,d$), we have the desired inequality.
\end{proof}

\begin{rem}
Note that in the above construction, the integers $n_1,\dots,n_t$ can always be chosen to be distinct primes.  In this case, the graph $G$ a unitary Cayley graph of $\Z/m\Z$, where $m = n_1 \cdots n_t$, with a path on $d$ vertices appended to a vertex via a bridge, i.e., a Cayleypop.
\end{rem}

Though the inequality $\gamma_{\mathrm{pr}}(G \times H) \geq \frac{1}{2}\gamma_{\mathrm{pr}}(G)\gamma_{\mathrm{pr}}(H)$ fails for general graphs $G$ and $H$, we can investigate those graphs for which this inequality holds.  We now proceed to prove a new sufficient condition for this inequality to hold involving the $3$-packing number.  This sufficient condition then allows us to show that any pair of graphs can appear as subgraphs of a pair of graphs satisfying the aforementioned inequality.  Moreover, using a result of Bre{\v s}ar, Henning, and Rall, this condition also implies that this inequality holds whenever $G$ and $H$ are trees.

\begin{lem}\emph{(\cite[Theorem 3.5]{BHR})}\label{lem: pr vs rho3}
For any graph $G$, we have $\gamma_{\mathrm{pr}}(G) \geq 2\rho_3(G)$.  
\end{lem}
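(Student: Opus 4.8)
The plan is to exhibit an injection from a maximum $3$-packing $P$ into the edges of a perfect matching witnessing the paired domination number, which immediately yields $\rho_3(G) \leq \gamma_{\mathrm{pr}}(G)/2$. Fix a maximum $3$-packing $P$ with $|P| = \rho_3(G)$, and let $D$ be a minimum paired dominating set, so that $\gamma_{\mathrm{pr}}(G) = |D|$ and $G[D]$ admits a perfect matching $M$. Note $|M| = |D|/2$, and since every vertex of $D$ lies in exactly one edge of $M$, the edges of $M$ partition $D$ into adjacent pairs.

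The central construction assigns to each packing vertex a matching edge. Since $D$ is dominating and each $p \in P$ is a vertex of $G$, we have $p \in N[D]$, so there exists a vertex $d_p \in D$ with $d_p \in N[p]$; fix one such choice for every $p \in P$. Now define a map $\varphi \colon P \to M$ by letting $\varphi(p)$ be the unique edge of the perfect matching $M$ that contains $d_p$.

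The key step is to verify that $\varphi$ is injective, and this is where the $3$-packing hypothesis is used. Suppose toward a contradiction that $\varphi(p) = \varphi(q) = \{x,y\}$ for distinct $p, q \in P$, where $\{x,y\} \in M$ so that $x$ and $y$ are adjacent in $G$. Then $d_p, d_q \in \{x,y\}$, and I would bound the distance between $p$ and $q$ by casework. If $d_p = d_q$, then since $d_p \in N[p]$ and $d_q \in N[q]$ we get $\mathrm{dist}(p,q) \leq \mathrm{dist}(p,d_p) + \mathrm{dist}(d_q,q) \leq 1 + 1 = 2$. If instead $\{d_p, d_q\} = \{x,y\}$, then using the matching edge $\{x,y\}$ as an intermediate step gives $\mathrm{dist}(p,q) \leq \mathrm{dist}(p,d_p) + \mathrm{dist}(d_p,d_q) + \mathrm{dist}(d_q,q) \leq 1 + 1 + 1 = 3$. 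In either case $\mathrm{dist}(p,q) \leq 3$, contradicting the defining property of a $3$-packing that distinct elements of $P$ lie at distance greater than $3$. Hence $\varphi$ is injective.

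Injectivity of $\varphi$ gives $\rho_3(G) = |P| \leq |M| = |D|/2 = \gamma_{\mathrm{pr}}(G)/2$, which is the claimed inequality $\gamma_{\mathrm{pr}}(G) \geq 2\rho_3(G)$. I do not anticipate a serious obstacle here: the argument is a short counting bound, and the only point requiring care is the distance casework establishing injectivity, where one must correctly account for the possibility that a single matched vertex dominates both $p$ and $q$ as well as the possibility that the two ends of one matching edge dominate $p$ and $q$ separately. Both cases are subsumed by the bound $\mathrm{dist}(p,q) \leq 3$, which is precisely the threshold ruled out by the $3$-packing condition.
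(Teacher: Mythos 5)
Your proof is correct. Note that the paper does not prove this lemma at all --- it is imported as a black box from Bre{\v s}ar, Henning, and Rall \cite[Theorem 3.5]{BHR} --- so there is no internal argument to compare against; your injection from a maximum $3$-packing into the matching edges of a minimum paired dominating set is the natural self-contained argument, and the distance casework (at most $2$ when one matched vertex dominates both packing vertices, at most $3$ when the two ends of a matching edge dominate them separately) correctly exhausts the possibilities and contradicts the requirement that distinct vertices of a $3$-packing lie at distance greater than $3$.
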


\begin{obs}\label{obs: packing product}
For any graphs $G$ and $H$, we have $\rho_3(G \times H) \geq \rho_3(G)\rho_3(H)$.  This follows by verifying that taking the direct product of $3$-packings in $G$ and $H$ yields a $3$-packing in $G \times H$.  
\end{obs}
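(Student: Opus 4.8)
The plan is to show directly that the product of maximum $3$-packings is itself a $3$-packing of the expected size. Let $P_G$ be a maximum $3$-packing of $G$ and $P_H$ a maximum $3$-packing of $H$, so that $|P_G| = \rho_3(G)$ and $|P_H| = \rho_3(H)$. I would then set $P = P_G \times P_H \subseteq V(G \times H)$, which has exactly $\rho_3(G)\rho_3(H)$ vertices, and prove that every pair of distinct vertices of $P$ is at distance greater than $3$ in $G \times H$. The bound $\rho_3(G \times H) \geq \rho_3(G)\rho_3(H)$ follows immediately, since $P$ is then a $3$-packing and $\rho_3(G \times H)$ is the size of the largest one.

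The key tool is a lower bound on distances in the direct product in terms of distances in the factors, namely $d_{G \times H}\bigl((u_G, u_H),(v_G, v_H)\bigr) \geq \max\{d_G(u_G, v_G),\, d_H(u_H, v_H)\}$. To establish it, I would take any walk from $(u_G, u_H)$ to $(v_G, v_H)$ in $G \times H$ and project it onto each coordinate. Since adjacency in $G \times H$ requires adjacency in both factors simultaneously, the first-coordinate projection of a walk of length $n$ is a walk of length $n$ in $G$ from $u_G$ to $v_G$, and likewise the second projection is a length-$n$ walk in $H$. Hence $n \geq d_G(u_G, v_G)$ and $n \geq d_H(u_H, v_H)$; minimizing over all walks gives the claimed inequality. (If the two vertices lie in distinct components, the distance is infinite and the inequality is vacuous.)

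With this in hand the verification is a short case analysis. Take two distinct vertices $(u_G, u_H)$ and $(v_G, v_H)$ of $P$; they differ in at least one coordinate, and by the symmetry between $G$ and $H$ we may assume $u_G \neq v_G$. Then $u_G$ and $v_G$ are distinct elements of the $3$-packing $P_G$, so $d_G(u_G, v_G) > 3$, whence $d_{G \times H}\bigl((u_G,u_H),(v_G,v_H)\bigr) \geq d_G(u_G, v_G) > 3$ by the distance bound. Thus any two distinct vertices of $P$ are at distance greater than $3$, so $P$ is a $3$-packing, as required.

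The only genuinely substantive step is the distance lower bound, and even this is routine once one recalls that walks in a direct product project to equally long walks in each factor. I expect the only subtlety worth flagging is that we need only a lower bound on the distance rather than an exact formula, so the parity obstructions that complicate exact distance computations in direct products play no role here.
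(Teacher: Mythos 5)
Your proof is correct and is precisely the verification the paper alludes to in its one-sentence justification: the product of $3$-packings is a $3$-packing, established via the standard fact that walks in a direct product project to walks of the same length in each factor, so $d_{G \times H} \geq \max\{d_G, d_H\}$. The case analysis and the remark about infinite distances in disconnected products are both handled correctly.
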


\begin{prop}\label{meet 1/2 pr dom bound}
Let $G$ and $H$ be graphs satisfying $\gamma_{\mathrm{pr}}(G) = 2\rho_3(G)$ and $ \gamma_{\mathrm{pr}}(H) = 2\rho_3(H)$.  Then
$$\gamma_{\mathrm{pr}}(G \times H) \geq \frac{1}{2}\gamma_{\mathrm{pr}}(G)\gamma_{\mathrm{pr}}(H)\,.$$
\end{prop}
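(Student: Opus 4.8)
The plan is to chain together the two results immediately preceding the statement, namely Lemma \ref{lem: pr vs rho3} and Observation \ref{obs: packing product}, and then substitute the hypotheses. All of the substance has been isolated into those two facts, so the argument should be a short computation.

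First I would apply Lemma \ref{lem: pr vs rho3} directly to the product graph $G \times H$, which gives
$$\gamma_{\mathrm{pr}}(G \times H) \geq 2\rho_3(G \times H)\,.$$
Next I would invoke Observation \ref{obs: packing product} to bound the $3$-packing number of the product below by the product of the $3$-packing numbers, obtaining $\rho_3(G \times H) \geq \rho_3(G)\rho_3(H)$, and combine these to get $\gamma_{\mathrm{pr}}(G \times H) \geq 2\rho_3(G)\rho_3(H)$. Finally I would use the hypotheses in the equivalent form $\rho_3(G) = \tfrac{1}{2}\gamma_{\mathrm{pr}}(G)$ and $\rho_3(H) = \tfrac{1}{2}\gamma_{\mathrm{pr}}(H)$, so that
$$\gamma_{\mathrm{pr}}(G \times H) \geq 2 \cdot \tfrac{1}{2}\gamma_{\mathrm{pr}}(G) \cdot \tfrac{1}{2}\gamma_{\mathrm{pr}}(H) = \tfrac{1}{2}\gamma_{\mathrm{pr}}(G)\gamma_{\mathrm{pr}}(H)\,,$$
as desired.

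I do not anticipate any genuine obstacle here, since the real work is carried by the inequality $\gamma_{\mathrm{pr}}(L) \geq 2\rho_3(L)$ of Bre{\v s}ar, Henning, and Rall and by the supermultiplicativity of the $3$-packing number under direct products. The only thing worth remarking on is that the hypotheses are precisely what is needed to make the two factors of $\tfrac{1}{2}$ (one from each equality $\gamma_{\mathrm{pr}} = 2\rho_3$) convert the general lower bound $\gamma_{\mathrm{pr}}(G\times H)\geq 2\rho_3(G)\rho_3(H)$ into the stated bound with constant $\tfrac{1}{2}$: the factor $2$ coming from Lemma \ref{lem: pr vs rho3} cancels one of them, leaving exactly the claimed coefficient.
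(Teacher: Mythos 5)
Your proposal is correct and follows exactly the same route as the paper: apply Lemma \ref{lem: pr vs rho3} to $G \times H$, then Observation \ref{obs: packing product}, then substitute the hypotheses $\rho_3(G) = \tfrac{1}{2}\gamma_{\mathrm{pr}}(G)$ and $\rho_3(H) = \tfrac{1}{2}\gamma_{\mathrm{pr}}(H)$. No gaps.
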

\begin{proof}
By Lemma \ref{lem: pr vs rho3} and Observation \ref{obs: packing product}, we have
\begin{align*}
    \gamma_{\mathrm{pr}}(G \times H) &\geq 2\rho_3(G \times H)\\
    &\geq 2\rho_3(G)\rho_3(H)\\
    &= 2\left(\frac{1}{2}\gamma_{\mathrm{pr}}(G)\right)\left(\frac{1}{2}\gamma_{\mathrm{pr}}(H)\right)\\
    &= \frac{1}{2}\gamma_{\mathrm{pr}}(G)\gamma_{\mathrm{pr}}(H)\,.\qedhere
\end{align*}
\end{proof}

In particular, Bre{\v s}ar, Henning, and Rall \cite{BHR} showed that the conditions of Proposition \ref{meet 1/2 pr dom bound} hold for trees.

\begin{thm}\emph{(\cite{BHR})}\label{thm: tree rho}
If $T$ is a tree, then $\gamma_{\mathrm{pr}}(T) = 2\rho_3(T)$.  
\end{thm}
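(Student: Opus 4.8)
The goal is to prove that for any tree $T$, the paired domination number equals twice the $3$-packing number, i.e. $\gamma_{\mathrm{pr}}(T) = 2\rho_3(T)$. Since Lemma~\ref{lem: pr vs rho3} already gives the inequality $\gamma_{\mathrm{pr}}(T) \geq 2\rho_3(T)$ for all graphs, the entire content lies in proving the reverse inequality $\gamma_{\mathrm{pr}}(T) \leq 2\rho_3(T)$ for trees. The plan is to proceed by induction on the number of vertices of $T$, building up an explicit paired dominating set whose size is controlled by a maximum $3$-packing.

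First I would establish the base cases (small trees such as a single edge $K_2$ or a path $P_3$) directly, checking that $\gamma_{\mathrm{pr}} = 2 = 2\rho_3$. For the inductive step, the natural approach is to root the tree and examine a deepest leaf together with its support vertex (the leaf's unique neighbor). I would pick a vertex $v$ at maximum distance from the root and let $u$ be its parent. The idea is that $v$ (or $u$) should serve as an element of a $3$-packing, and then prune a neighborhood around it, apply the inductive hypothesis to a smaller subtree, and extend the resulting paired dominating set by the pair $\{u, v\}$. The critical accounting to maintain is that each added matched pair $\{u,v\}$ in the paired dominating set can be charged against a distinct element of the $3$-packing in the remaining tree — this is where the distance-$3$ separation becomes essential, since it guarantees that packing elements chosen in different pruning steps stay far enough apart to remain a valid $3$-packing.

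The key structural observation I would exploit is the interplay between the matching requirement of paired domination and the distance-$3$ constraint of a $3$-packing: when we greedily select a support vertex $u$ and its leaf neighbor $v$, the closed second neighborhood of $v$ that gets dominated and removed is large enough that the vertex we designate as a packing element has no packing element within distance $3$ in the pruned subtree. Concretely, after removing the subtree hanging below $u$ (or a suitable ball around $u$), I would argue that a maximum $3$-packing of the smaller tree together with the newly exposed packing element (say $v$) forms a $3$-packing of the original $T$, so $\rho_3(T) \geq \rho_3(T') + 1$, while the paired dominating set grows by exactly $2$. Matching these two increments gives the inductive bound $\gamma_{\mathrm{pr}}(T) \leq \gamma_{\mathrm{pr}}(T') + 2 \leq 2\rho_3(T') + 2 \leq 2\rho_3(T)$.

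The main obstacle I anticipate is the careful case analysis in the pruning step: depending on how many leaves hang off the support vertex $u$, on whether $u$ itself has further ancestors that are already dominated, and on the distance from $u$ up to the rest of the tree, the precise subtree to remove and the precise vertices to add to the paired dominating set will differ. Ensuring simultaneously that (i) the new set remains \emph{dominating} over all of $T$, (ii) it still admits a \emph{perfect matching} on its induced subgraph, and (iii) the chosen packing vertex is genuinely at distance greater than $3$ from every packing vertex retained in $T'$, requires handling these configurations separately. Since the excerpt attributes this result to Bre\v{s}ar, Henning, and Rall, I would expect their argument to organize these cases cleanly, perhaps via a labeling or charging scheme on the rooted tree rather than raw induction, but the conceptual crux remains reconciling the matched-pair structure with the distance-$3$ packing in each local configuration.
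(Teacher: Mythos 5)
First, note that the paper does not prove this statement: Theorem~\ref{thm: tree rho} is quoted verbatim from Bre\v{s}ar, Henning, and Rall \cite{BHR} and is used as a black box (together with Lemma~\ref{lem: pr vs rho3}) to deduce Corollary~\ref{cor: tree prod}. So there is no in-paper proof to compare your attempt against; your sketch has to be judged as a proof of the cited result itself.

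Your high-level strategy --- take the lower bound $\gamma_{\mathrm{pr}}(T)\ge 2\rho_3(T)$ from Lemma~\ref{lem: pr vs rho3} and prove $\gamma_{\mathrm{pr}}(T)\le 2\rho_3(T)$ by induction on a rooted tree, pruning around a deepest leaf and charging each added matched pair to a new $3$-packing vertex --- is the standard and correct shape for results of this kind. But as written the inductive step does not close, and the places where it fails are exactly the content of the theorem. Two concrete problems. (i) \emph{The distance bookkeeping is off by one.} In this paper a $3$-packing requires pairwise distance \emph{greater than} $3$, i.e.\ at least $4$. If $v$ is a deepest leaf with parent $u$, grandparent $w$, and great-grandparent $x$, and you prune the subtree rooted at $u$ (or even at $w$) while designating $v$ as the new packing element, then $v$ is at distance exactly $3$ from $x$, which may well lie in the maximum $3$-packing of the pruned tree $T'$; so $\rho_3(T)\ge\rho_3(T')+1$ does not follow from your pruning. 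Pruning one level higher to restore the distance breaks the other half of the accounting, since the subtree rooted at $x$ need not be dominated by a single pair. Resolving this tension (e.g.\ by showing a maximum $3$-packing of $T'$ can be chosen to avoid the cut vertex, or by a more careful choice of which vertex to add to the packing) is the real work. (ii) \emph{The increment $\gamma_{\mathrm{pr}}(T)\le\gamma_{\mathrm{pr}}(T')+2$ is wasteful in degenerate configurations.} For $T=P_4$ with vertices $a$--$b$--$c$--$d$, removing the pair $\{c,d\}$ leaves $T'=ab$ with $\gamma_{\mathrm{pr}}(T')=2$, giving the bound $4$, while $2\rho_3(P_4)=2$; the correct pair is $\{b,c\}$, which your ``deepest leaf plus its support'' rule does not select. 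This forces either a nontrivial base case covering all trees of diameter at most $3$ (where both sides equal $2$) or a pruning rule that sometimes pairs the support with its parent rather than with the leaf. You anticipate that ``careful case analysis'' is needed, but since the theorem is precisely the assertion that all these cases can be made consistent simultaneously, the proposal as it stands is a plan rather than a proof.
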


Combining Proposition \ref{meet 1/2 pr dom bound} with Theorem \ref{thm: tree rho}, we immediately obtain the following result.

\begin{cor}\label{cor: tree prod}
The inequality $$\gamma_{\mathrm{pr}}(T_1 \times T_2) \geq \frac{1}{2}\gamma_{\mathrm{pr}}(T_1)\gamma_{\mathrm{pr}}(T_2)$$
holds whenever $T_1$ and $T_2$ are trees.
\end{cor}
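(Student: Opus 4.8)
The plan is to invoke Proposition \ref{meet 1/2 pr dom bound} with $G = T_1$ and $H = T_2$, so that the only real task is to verify that both trees satisfy the hypothesis $\gamma_{\mathrm{pr}}(\cdot) = 2\rho_3(\cdot)$. This is precisely the content of Theorem \ref{thm: tree rho}, applied separately to each tree. First I would note that, since $T_1$ is a tree, Theorem \ref{thm: tree rho} gives $\gamma_{\mathrm{pr}}(T_1) = 2\rho_3(T_1)$; identically, $\gamma_{\mathrm{pr}}(T_2) = 2\rho_3(T_2)$. With both equalities in hand, the pair $(T_1, T_2)$ meets the hypotheses of Proposition \ref{meet 1/2 pr dom bound} verbatim, and its conclusion yields the claimed inequality $\gamma_{\mathrm{pr}}(T_1 \times T_2) \ge \frac{1}{2}\gamma_{\mathrm{pr}}(T_1)\gamma_{\mathrm{pr}}(T_2)$.

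I expect no genuine obstacle here, since all of the substantive content has already been isolated into the two prior results. Inside Proposition \ref{meet 1/2 pr dom bound}, the work is carried by the bound $\gamma_{\mathrm{pr}}(T_1 \times T_2) \ge 2\rho_3(T_1 \times T_2)$ from Lemma \ref{lem: pr vs rho3} together with the supermultiplicativity $\rho_3(T_1 \times T_2) \ge \rho_3(T_1)\rho_3(T_2)$ from Observation \ref{obs: packing product}; the tree-specific equality $\gamma_{\mathrm{pr}} = 2\rho_3$ is the input from \cite{BHR}. The corollary is thus a pure combination, and I would keep the proof to a single sentence chaining these two citations.

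The only point worth a moment's care is that $\gamma_{\mathrm{pr}}$ must be defined on $T_1$ and $T_2$, i.e., that neither tree has isolated vertices. Any tree on at least two vertices is connected and has no isolated vertex, so this holds automatically, and the degenerate single-vertex tree is already excluded by the standing assumption in Section \ref{sec: prelims} that graphs considered for the paired domination number have no isolated vertices. Hence no separate case analysis is needed.
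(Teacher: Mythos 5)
Your proposal is correct and matches the paper's proof exactly: the paper also obtains this corollary by combining Theorem \ref{thm: tree rho} (to verify $\gamma_{\mathrm{pr}}(T_i) = 2\rho_3(T_i)$ for each tree) with Proposition \ref{meet 1/2 pr dom bound}. The extra remark about isolated vertices is a harmless bit of diligence that the paper leaves implicit.
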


 Bre{\v s}ar, Klav{\v z}ar, and Rall \cite{BKR} examined the subdivided star $S_n$ to show that the ratio of $\gamma_{\mathrm{pr}}(G \times G)$ to $\gamma_{\mathrm{pr}}(G)^2$ can be arbitrarily close to $\frac{1}{2}$.  Thus the constant factor in the inequality of Corollary \ref{cor: tree prod} is tight.  We now construct infinitely many graphs satisfying $\gamma_{\mathrm{pr}}(G \times H) \geq \frac{1}{2}\gamma_{\mathrm{pr}}(G)\gamma_{\mathrm{pr}}(H)$, with the added property that any graph can occur as a subgraph of such $G$ and $H$.

\begin{thm}\label{thm: prod subgraph}
For any graphs $G$ and $H$, there exist graphs $G'$ and $H'$ containing $G$ and $H$, respectively, as induced subgraphs such that 
$$\gamma_{\mathrm{pr}}(G' \times H') \geq \frac{1}{2}\gamma_{\mathrm{pr}}(G')\gamma_{\mathrm{pr}}(H')\,.$$
\end{thm}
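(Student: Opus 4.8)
The plan is to reduce to Proposition \ref{meet 1/2 pr dom bound}, which guarantees the desired inequality for any pair of graphs satisfying $\gamma_{\mathrm{pr}} = 2\rho_3$. Hence it suffices to exhibit, for an arbitrary graph $G$, a graph $G'$ containing $G$ as an induced subgraph and satisfying $\gamma_{\mathrm{pr}}(G') = 2\rho_3(G')$; applying the same construction to $H$ and invoking the proposition will then finish the proof.

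For the construction, I would take $G'$ to be obtained from $G$ by attaching a pendant path of length two to each vertex: for every $v \in V(G)$ introduce two new vertices $a_v, b_v$ together with edges $\{v,a_v\}$ and $\{a_v,b_v\}$. Since the new vertices carry only edges incident to themselves, we have $G'[V(G)] = G$, so $G$ is an induced subgraph of $G'$; moreover every original vertex acquires degree at least one, so $G'$ has no isolated vertices and $\gamma_{\mathrm{pr}}(G')$ is well-defined. Writing $n = |V(G)|$, the upper bound comes from the set $D = \{a_v, b_v : v \in V(G)\}$: the edges $\{a_v,b_v\}$ form a perfect matching of $G'[D]$, and since $a_v$ dominates $v$ while $a_v, b_v$ dominate one another, we get $N[D] = V(G')$, whence $\gamma_{\mathrm{pr}}(G') \le 2n$.

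The matching lower bound comes from packing. I claim the leaf set $\{b_v : v \in V(G)\}$ is a $3$-packing. Because $a_v$ and $b_v$ are pendant attachments, they create no shortcuts in $G'$, so any path from $b_v$ to $b_w$ with $v \ne w$ must traverse $v$ and $w$, giving $\mathrm{dist}_{G'}(b_v, b_w) = 2 + \mathrm{dist}_G(v,w) + 2 \ge 5 > 3$ (interpreting the distance as $\infty$ when $v,w$ lie in different components). Thus $\rho_3(G') \ge n$, and Lemma \ref{lem: pr vs rho3} yields $\gamma_{\mathrm{pr}}(G') \ge 2\rho_3(G') \ge 2n$. Combining this with the upper bound forces $\gamma_{\mathrm{pr}}(G') = 2\rho_3(G') = 2n$, as required, and performing the analogous construction on $H$ completes the argument via Proposition \ref{meet 1/2 pr dom bound}.

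There is no serious structural obstacle once the gadget is chosen correctly; the single point demanding care is the \emph{length} of the attached path. A mere pendant vertex would place two leaves at distance exactly $3$ whenever their roots are adjacent in $G$, violating the strict inequality required of a $3$-packing, whereas the length-two path guarantees all pairwise leaf distances are at least $5$. It is precisely this separation that lets the elementary packing lower bound meet the elementary paired-dominating upper bound, pinning down the equality $\gamma_{\mathrm{pr}}(G') = 2\rho_3(G')$ that the proposition demands.
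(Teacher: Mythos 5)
Your proposal is correct and follows essentially the same route as the paper: attach a pendant path of two new vertices to every vertex of $G$ (and of $H$), use the new edges as a paired dominating set of size $2n$, use the $n$ new leaves as a $3$-packing to get the matching lower bound via Lemma \ref{lem: pr vs rho3}, and conclude via Proposition \ref{meet 1/2 pr dom bound}. The observation about why a path of length two (rather than a single pendant vertex) is needed is exactly the point the paper's construction relies on.
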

\begin{proof}
Let $G'$ and $H'$ be the graphs formed by attaching a path on $2$ vertices, i.e., a copy of $K_2$, to each vertex of $G$ and $H$, respectively, with a bridge. By construction, $G'$ contains $G$ as an induced subgraph and $H'$ contains $H$ as an induced subgraph.  By Proposition \ref{meet 1/2 pr dom bound}, it is enough to show that $\gamma_{\mathrm{pr}}(G') = 2\rho_3(G')$ and $\gamma_{\mathrm{pr}}(H') = 2\rho_3(H')$.  We show that this is indeed the case.  

Let $|V(G)| = n$.  There are precisely $n$ new vertices of degree one in $G'$; label these by $x_1,\dots,x_n$.  Let $y_i$ be the unique vertex adjacent to $x_i$ for $1 \leq i \leq n$.  Note that the $x_i$ have distance at least $5$ from each other, so the $x_i$ form a $4$-packing of $G'$.  Hence we have $\rho_3(G') \geq \rho_4(G') \geq n$.   Moreover, observe that 
$$\{x_i : 1 \leq i \leq n\} \cup \{y_i : 1 \leq i \leq n\}$$
is a paired dominating set of $G'$, where $x_i$ is paired with $y_i$ for each $i$.  Thus $\gamma_{\mathrm{pr}}(G') \leq 2n$.  Since we have $\gamma_{\mathrm{pr}}(G') \geq 2\rho_3(G')$ by Lemma \ref{lem: pr vs rho3}, we can conclude $\gamma_{\mathrm{pr}}(G') = 2\rho_3(G') = n$.
\end{proof}
\begin{figure}[h]
\begin{center}
\includegraphics[width=.9\linewidth]{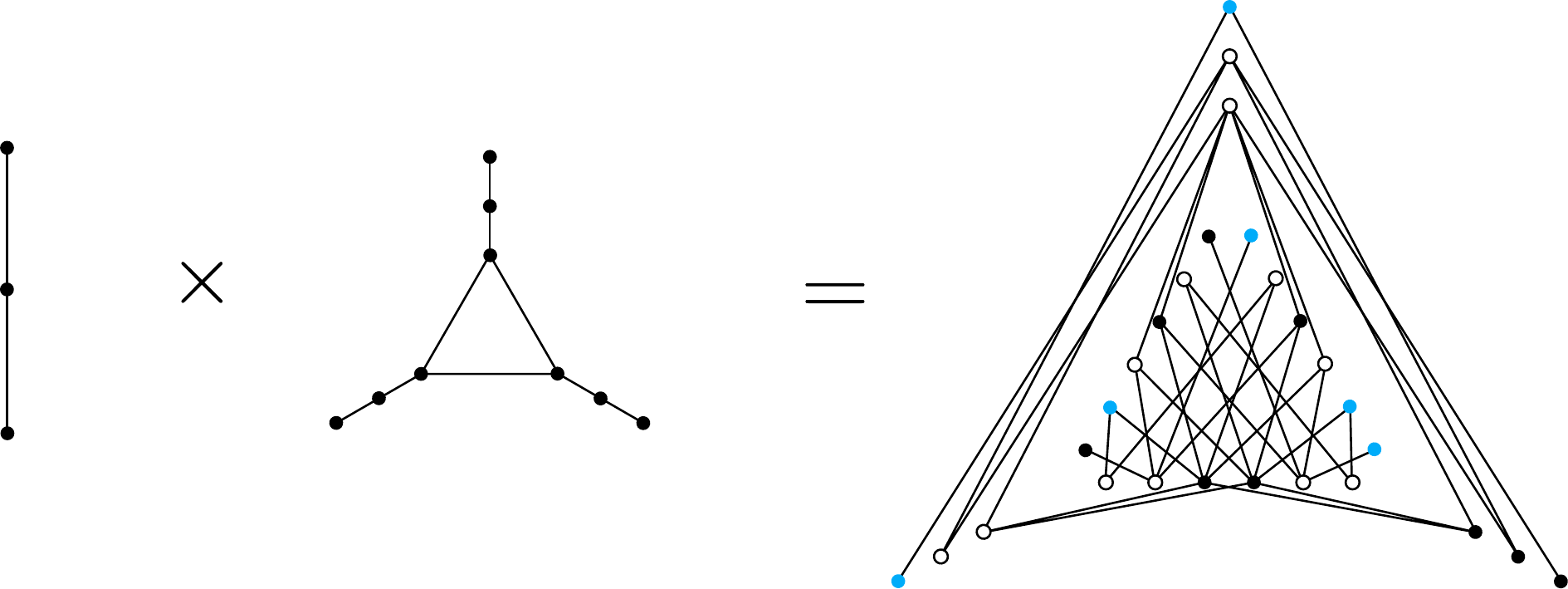}
\caption{The left and center graphs depict $G'$ and $H'$ constructed, as in the proof of Theorem \ref{thm: prod subgraph}, by attaching a path on two vertices to every vertex of $G = K_1$ and $H = K_3$, respectively, with a bridge.  The graph on the right is the direct product graph $G' \times H'$, with a $3$-packing of size $6$ shown in blue and a paired matching of size $12$ shown in white.}
\label{Fig2}
\end{center}  
\end{figure}
\begin{exmp}
Let $G$ be an isolated vertex and $H = K_3$ be a triangle. Let $G'$ and $H'$ be the graphs constructed as in the proof of \ref{thm: prod subgraph}, shown in Figure \ref{Fig2}.  

It can be easily verified that $\gamma_{\mathrm{pr}}(G') = 2$ and $\gamma_{\mathrm{pr}}(H') = 6$.  There is a $3$-packing of size $6$ in $G' \times H'$, as exhibited in Figure \ref{Fig2}.  Hence by Lemma \ref{lem: pr vs rho3} we have
$$\gamma_{\mathrm{pr}}(G' \times H') \geq 2\rho_3(G' \times H') \geq 12 \geq \frac{1}{2} \gamma_{\mathrm{pr}}(G') \gamma_{\mathrm{pr}}(H')\,,$$
as guaranteed by Theorem \ref{thm: prod subgraph}.
Furthermore, since $G' \times H'$ has a paired dominating set of size $12$, we can conclude $\gamma(G' \times H') = 12 = \gamma_{\mathrm{pr}}(G') \gamma_{\mathrm{pr}}(H')$.
\end{exmp}

\section{The Upper Domination Numbers of Direct Product Graphs}\label{Upper Dom}
In this section, we prove that the inequality 
\begin{equation}\label{eq: Gamma}
    \Gamma(G \times H) \geq \Gamma(G)\Gamma(H)
\end{equation}
is tight on a family of graphs with arbitrarily large upper domination number.  This supermultiplicative inequality was originally proven for any graphs $G, H$ in 2007 by Bre{\v s}ar, Klav{\v z}ar, and Rall \cite{BKR}.  While they were not able to prove the tightness of this inequality, they suggested that the $2 \times n$ rook graphs could be a possible candidate for attaining equality.  We show that this is indeed the case for $n \geq 71$.  

In general, the {\it $m \times n$ rook graph} is the graph $K_m \cart K_n$.  It can be viewed as the connectivity graph of a rook chess piece on an $m \times n$ chessboard.  We are particularly interested in the case $m = 2$, thus we denote the $2 \times n$ rook graph by $G_n$.  The graph $G_n$ can be viewed as a disjoint union of two complete graphs on $n$ vertices with a set of $n$ edges forming a perfect matching between them. It is straightforward to see that the upper domination number of $G_n$ is $n$, where a subgraph isomorphic to $K_n$ yields a minimal dominating set of maximum size.

Bre{\v s}ar, Klav{\v z}ar, and Rall noted that if $\Gamma(G \times G) = \Gamma(G)^2$ for some graph $G$, then necessarily $\alpha(G) < \Gamma(G)$.  Observe that for every graph $G$, the weak inequality $\alpha(G) \leq \Gamma(G)$ holds.  Since $\alpha(G_n) = 2 < n = \Gamma(G_n)$, these parameters can have an arbitrarily large difference on this family.  This provides some motivation for studying the tightness of \eqref{eq: Gamma} on the $2 \times n$ rook graphs.

\begin{thm}\label{thm: upper dom}
Fix $n \geq 71$ and let $G_n = K_2 \cart K_n$.  Then $\Gamma(G_n \times G_n) = n^2 = \Gamma(G_n)^2$.
\end{thm}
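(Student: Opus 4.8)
The lower bound is immediate: by the inequality \eqref{eq: Gamma} of Bre{\v s}ar, Klav{\v z}ar, and Rall together with $\Gamma(G_n) = n$, we have $\Gamma(G_n \times G_n) \ge \Gamma(G_n)^2 = n^2$. The entire content of the theorem is therefore the reverse inequality $\Gamma(G_n \times G_n) \le n^2$, i.e.\ that no minimal dominating set of $G_n \times G_n$ has more than $n^2$ vertices.

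The plan is to fix a minimal dominating set $D$ and exploit the highly structured adjacency of $G_n \times G_n$. Writing a vertex as a $4$-tuple $(a,i,b,j)$ with $a,b \in \{0,1\}$ and $i,j \in \{0,\dots,n-1\}$, one checks that $(a,i,b,j)$ and $(c,k,d,l)$ are adjacent precisely when exactly one of $a=c$, $i=k$ holds and exactly one of $b=d$, $j=l$ holds. This partitions $V(G_n\times G_n)$ into four \emph{blocks} indexed by $(a,b)$, each an $n\times n$ grid in the coordinates $(i,j)$; every block induces a copy of $K_n \times K_n$, and the six pairs of blocks interact according to the three perfect matchings of $K_4$. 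Concretely, two blocks agreeing in exactly one of the two $K_2$-coordinates dominate one another along the corresponding family of common grid-lines (grid-columns when they agree in the first coordinate, grid-rows when they agree in the second), while the two block pairs disagreeing in both coordinates are joined by a perfect matching of grid-cells. I would set $D_{ab} = D \cap (\text{block } (a,b))$ and $d_{ab} = |D_{ab}|$, so the goal becomes $\sum_{a,b} d_{ab} \le n^2$.

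The key structural tool is a lemma describing which cells of a block fail to be dominated from within their own block. Call a cell $(k,l)$ of block $(a,b)$ \emph{internally undominated} if $D_{ab}$ lies in the union of grid-row $k$ and grid-column $l$, and let $U_{ab}$ be the set of such cells; these are exactly the cells of the block that $D_{ab}$ does not already dominate. I would first show that if two cells of $U_{ab}$ share neither a grid-row nor a grid-column, then $D_{ab}$ is forced into the two ``crossing'' cells they determine, whence $d_{ab}\le 2$. Consequently, whenever $d_{ab}\ge 3$ the family $U_{ab}$ is pairwise intersecting (any two of its cells share a line); viewing cells as edges of the complete bipartite graph on grid-rows and grid-columns, and invoking the fact that a triangle-free intersecting family of edges is a star, we conclude that $U_{ab}$ lies in a single grid-line. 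Thus a ``large'' block leaves at most $n$ of its cells undominated, all on one line, and each such cell must then be dominated by another block or lie in $D$.

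With this in hand, the final step is to combine minimality of $D$ with the block interactions to force $\sum_{a,b} d_{ab}\le n^2$. For each $v \in D$ I would record a private neighbor $p(v)$ and classify it by type---$v$ itself, a cell of $v$'s own block, a cell of a grid-line-partner block, or the single matched cell of the diagonal block---which, combined with the structural lemma applied to each block, yields a system of inequalities among the $d_{ab}$ and the numbers of occupied grid-rows and grid-columns in each block. The extremal configuration to keep in mind is $D$ equal to one full block, of size $n^2$, whose private neighbors fill precisely the matched diagonal block; the inequalities must rule out anything larger. I expect the main obstacle to be exactly this combination: carrying out the casework according to how many blocks are ``large'' ($d_{ab}\ge 3$) versus ``small'' (where the structural lemma does not apply), and verifying that the cross-domination a block receives from its three partners can never compensate for the private-neighbor deficit incurred by exceeding $n^2$ cells in total. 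Several of the resulting bounds carry additive lower-order (in $n$) error terms coming from the one-line exceptional sets $U_{ab}$ and from the small-block cases; the hypothesis $n \ge 71$ should be precisely what guarantees that the quadratic target $\tfrac{1}{4}|V(G_n \times G_n)| = n^2$ dominates these error terms, closing the argument.
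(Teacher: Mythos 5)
Your setup is the same as the paper's: the four blocks $N_{ab}$ (each inducing $K_n\times K_n$, with row/column interactions between blocks sharing a $K_2$-coordinate and a perfect matching between diagonal block pairs), the decomposition $D=\bigcup D_{ab}$, and the intent to count private neighbors. Your auxiliary lemma is also correct: if two internally undominated cells of a block share no grid-line then $D_{ab}$ is contained in the two crossing cells, so any block with $d_{ab}\ge 3$ has all its internally undominated cells on a single grid-line. But the proof stops exactly where the theorem's difficulty begins. The entire content of the upper bound is the quantitative private-neighbor analysis that rules out $|D|>n^2$, and you explicitly defer it (``I would record a private neighbor and classify it by type\dots I expect the main obstacle to be exactly this combination''). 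That combination is not routine. The paper's argument splits into four cases according to which blocks meet $D$, and the case where exactly three blocks are occupied requires three dedicated lemmas with explicit numerics: first, that all but $O(n)$ elements of $D$ lie in one block $D_{00}$ and have their private neighbors in the diagonal block $N_{11}$; then that the two side blocks together contain more than $2n-43$ elements of $D$; then that the side blocks span at most $10$ lines each and contribute at most $19$ cross private neighbors --- and the contradiction comes from showing that at least $11$ elements of a side block need private neighbors inside their own block, which forces them onto a single line, contradicting the span bounds. Your structural lemma about undominated cells does not substitute for any of this: it constrains where a \emph{large} block fails to dominate itself, but the crux is controlling the private neighbors of the \emph{small} blocks' vertices, which is a different (and harder) counting problem.

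A secondary issue is that the case where only two ``adjacent'' blocks (say $N_{00}$ and $N_{01}$) are occupied does not follow from private-neighbor counting alone in the paper; it needs the separate observation that each column's projection must be a total dominating set of $G_n$, and that minimal such sets have size at most $n$. Nothing in your outline anticipates this case, and your proposed ``system of inequalities among the $d_{ab}$'' would need a different mechanism to handle it. So while the skeleton matches the paper's proof, the argument as written establishes only the easy lower bound $\Gamma(G_n\times G_n)\ge n^2$ and leaves the upper bound --- including the role of the hypothesis $n\ge 71$ --- unproved.
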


Before presenting the proof of this theorem, we examine the structure of the graph $G_n \times G_n$.

There is a natural identification of the vertices of $G_n$ with pairs in $\Z_2 \times \Z_n$, where a vertex $(a,b)$ adjacent to $(a',b')$ if and only if exactly one of $a = a'$ or $b = b'$ holds.  This induces an identification of the vertices of $G_n \times G_n$ with the $4$-tuples $(a,b,c,d) \in \Z_2 \times \Z_n \times \Z_2 \times \Z_n$.  We then partition the vertices of $G_n \times G_n$ into four classes of size $n^2$, namely 
$$N_{ij} = \{i\} \times \Z_n \times \{j\} \times \Z_n \;\;\;\text{ for } i,j \in \{0,1\}\,.$$

Given a dominating set $D$ of $G_n \times G_n$, we accordingly partition $D$ into four (possibly empty) classes, $D_{ij} = D \cap N_{ij}$.  Our proof of Theorem \ref{thm: upper dom} will proceed via casework on which classes $D_{ij}$ are nonempty.  If we assume $|D| > n^2$, then at least two classes $D_{ij}$ must be nonempty.  By symmetry on the labeling of the vertices, it is enough to consider four cases: 
\begin{enumerate}[(i)]
    \item\label{a} The classes $D_{00}$ and $D_{11}$ are nonempty and the remaining classes are empty.
    \item\label{b} The classes $D_{00}$ and $D_{01}$ are nonempty and the remaining classes are empty.
    \item\label{c} The classes $D_{00}$, $D_{10}$, and $D_{01}$ are nonempty and $D_{11}$ is empty.
    \item\label{d} All four of the classes $D_{ij}$ are nonempty.
\end{enumerate}

Cases (\ref{a}), (\ref{b}), and (\ref{d}) are relatively straightforward.  Case (\ref{c}) is the most complex, so we address several lemmas corresponding to this case before presenting the proof of Theorem \ref{thm: upper dom}.  To this end, we introduce some additional notation describing the structure of $G_n \times G_n$.  The {\it $b_0$-column} of $N_{ij}$ is the set of vertices $\{(i,b_0,j,d) : d \in \Z_n\}$ for fixed $b_0 \in \Z_n$.  The {\it $d_0$-row} of $N_{ij}$ is analogously the set of vertices $\{(i,b,j,d_0) : b \in \Z_n\}$ for fixed $d_0 \in \Z_n$.  A {\it row} (resp. {\it column}) of $D_{ij}$ is the intersection of $D$ with a row (resp. column) of $N_{ij}$.  Suppose $(i,j) \neq (i',j') \in \Z_2 \times \Z_2$, and fix $u \in N_{ij}$, $v \in N_{i'j'}$. The vertices $u$ and $v$ lie in {\it corresponding columns} if $u$ lies in the $b_0$-column of $N_{ij}$ and $v$ lies in the $b_0$-column of $N_{i'j'}$ for some $b_0 \in \Z_n$.  Similarly, $u$ and $v$ lie in {\it corresponding rows} if $u$ lies in the $d_0$-row of $N_{ij}$ and $v$ lies in the $d_0$-row of $N_{i'j'}$ for some $d_0 \in \Z_n$.  We say that $u$ and $v$ are {\it corresponding vertices} if they lie in corresponding rows and corresponding columns.

We now state a few basic adjacency properties in $G_n \times G_n$ using the additional language of corresponding rows, columns, and vertices.  These properties are illustrated in Figure \ref{Fig3} for the case $n = 3$.
\begin{obs}\label{obs: adj}
Consider the graph $G_n \times G_n$.  For $i,j \in \Z_2$, we have that
\begin{enumerate}[(a)]
    \item Two vertices in $N_{ij}$ are adjacent if and only if they are in different rows and columns.  Hence each $N_{ij}$ induces a $K_n \times K_n$ subgraph.
    \item A vertex $u \in N_{i0}$ is adjacent to $v \in N_{i1}$ if and only if $u$ and $v$ lie in corresponding rows and are not corresponding vertices.  Thus corresponding rows in $N_{i0}$ and $N_{i1}$ induce a $K_2 \times K_n$ subgraph.  
    \item A vertex $u \in N_{0j}$ is adjacent to $v \in N_{1j}$ if and only if $u$ and $v$ lie in corresponding columns and are not corresponding vertices.  Thus corresponding columns in $N_{0j}$ and $N_{1j}$ induce a $K_n \times K_2$ subgraph.  
    \item Fix $j' \neq j \in \Z_2$.  Then $u \in N_{0j}$ is adjacent to $v \in N_{1j'}$ if and only if $u$ and $v$ are corresponding vertices.
\end{enumerate}
\end{obs}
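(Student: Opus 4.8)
The plan is to reduce every claim to a single mechanical principle. By the definition of the direct product, two vertices $(a,b,c,d)$ and $(a',b',c',d')$ of $G_n \times G_n$ are adjacent if and only if $(a,b)$ is adjacent to $(a',b')$ in $G_n$ \emph{and} $(c,d)$ is adjacent to $(c',d')$ in $G_n$. Since $G_n = K_2 \cart K_n$, the adjacency $(a,b) \sim (a',b')$ holds precisely when exactly one of the equalities $a = a'$ and $b = b'$ is true. Thus adjacency in $G_n \times G_n$ is equivalent to the conjunction of two ``exactly one of'' clauses: exactly one of $a=a'$, $b=b'$, and exactly one of $c=c'$, $d=d'$. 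Everything in the observation will follow by unwinding this criterion.

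First I would fix, in each of the four parts, the class memberships of $u$ and $v$; membership in $N_{ij}$ pins the first and third coordinates to $i$ and $j$, so it determines the truth values of $a=a'$ and $c=c'$. The key bookkeeping step is that each ``exactly one of'' clause then collapses to a single requirement on the remaining coordinate pair: when the forced coordinate equality ($a=a'$ or $c=c'$) is \emph{true}, the clause forces the paired equality to \emph{fail}; when it is \emph{false}, the clause forces the paired equality to \emph{hold}. With this rule in hand, I would carry out the translation part by part. In (a) both $a=a'$ and $c=c'$ are true, yielding $b \neq b'$ and $d \neq d'$, i.e.\ different columns and rows. In (b), with $u \in N_{i0}$, $v \in N_{i1}$, we have $a=a'$ true but $c=0\neq1=c'$, yielding $b \neq b'$ and $d = d'$; since $d=d'$ means corresponding rows and $b \neq b'$ (given $d=d'$) means not corresponding vertices, this is exactly the stated criterion. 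In (c), $a\neq a'$ while $c=c'$, giving $b=b'$ and $d \neq d'$, i.e.\ corresponding columns but not corresponding vertices. In (d), both $a=a'$ and $c=c'$ fail, forcing $b=b'$ and $d=d'$, i.e.\ corresponding vertices.

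For the induced-subgraph identifications, I would fix the range in question (a single class $N_{ij}$, or a pair of corresponding rows or columns) and observe that the fixed coordinates make the governing equalities true, so by the rule above the surviving adjacency condition is precisely that each free complete-graph coordinate must differ. This exhibits each subgraph directly as a product of complete graphs: the free pair $(b,d)$ ranging over $\Z_n \times \Z_n$ in (a) gives $K_n \times K_n$; fixing the row $d=d_0$ in (b) leaves the free coordinates $c \in \{0,1\}$ and $b \in \Z_n$, both of which must differ, giving $K_2 \times K_n$; and fixing the column $b=b_0$ in (c) leaves $a \in \{0,1\}$ and $d \in \Z_n$, giving $K_n \times K_2$.

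The only obstacle worth naming is purely organizational: keeping the logic of the ``exactly one of'' clause straight, and in particular remembering that a \emph{forced true} coordinate equality forbids its paired equality while a \emph{forced false} one demands it. Once this is tabulated across the four combinations of truth values for $a=a'$ and $c=c'$, every assertion in the observation follows by inspection, and there is no genuine difficulty.
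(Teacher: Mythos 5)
Your proof is correct and is exactly the definition-unwinding the paper leaves implicit: the observation is stated without proof, and your reduction of adjacency in $G_n \times G_n$ to the conjunction of two ``exactly one of'' clauses, followed by case analysis on which coordinate equalities are forced by class membership, is the intended argument. All four parts and the induced-subgraph identifications check out.
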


\begin{figure}[h]
\begin{center}
\includegraphics[width=.8\linewidth]{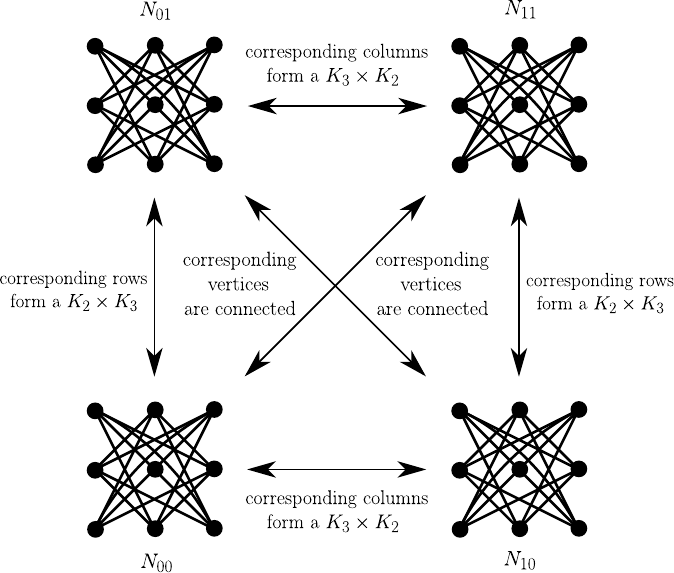}
\caption{An illustration of the graph $G_3 \times G_3$ with the vertices partitioned into four classes $N_{ij}$ for $i,j \in \Z_2$, connected as shown.}
\label{Fig3}
\end{center}  
\end{figure}

\begin{lem}\label{lem: c1}
Suppose $D$ is a minimal dominating set of $G_n \times G_n$ of size greater than $n^2$ and the conditions of Case (\ref{c}) hold. Then at least $n^2 - 10n$ members of $D$ are contained in $D_{00}$ and have a private neighbor in $N_{11}$.  Moreover, no vertex of $N_{00}$ is a private neighbor of a vertex in $D$.
\end{lem}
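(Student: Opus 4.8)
The plan is to reduce the entire statement to a single inequality controlling how much of $N_{11}$ is dominated by $D_{10}\cup D_{01}$. Write each vertex of $N_{11}$ as $(1,b,1,d)$ and set $S=\{(b,d):(0,b,0,d)\in D_{00}\}$ and $T=\{(b,d):(1,b,1,d)\text{ is dominated by }D_{10}\cup D_{01}\}$, so that $|S|=|D_{00}|$. By Observation \ref{obs: adj}, the unique neighbor of $(0,b,0,d)$ lying in $N_{11}$ is its corresponding vertex $(1,b,1,d)$, and the only members of $D$ that can dominate $(1,b,1,d)$ are $(0,b,0,d)$ together with the vertices of $D_{10}$ in row $d$ (in a different column) and of $D_{01}$ in column $b$ (in a different row), since $D_{11}=\emptyset$. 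Hence a vertex $(0,b,0,d)\in D_{00}$ has its $N_{11}$-neighbor as a private neighbor exactly when $(b,d)\notin T$; and conversely, if $(b,d)\notin T$ then $(1,b,1,d)$ must be dominated by $D_{00}$, forcing $(b,d)\in S$. Thus the members of $D_{00}$ with a private neighbor in $N_{11}$ are precisely those indexed by $(\Z_n\times\Z_n)\setminus T$, and there are exactly $n^2-|T|$ of them. So the first assertion is \emph{equivalent} to $|T|\le 10n$, which is what I would aim to prove.

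Next I would pin down where private neighbors can live. By Observation \ref{obs: adj}, a vertex of $D_{00}$ reaches exactly one vertex of $N_{11}$, a vertex $(1,b,0,d)\in D_{10}$ reaches all of row $d$ of $N_{11}$ except column $b$, and a vertex $(0,b,1,d)\in D_{01}$ reaches all of column $b$ of $N_{11}$ except row $d$. The crucial structural input is that $D_{00}$ is \emph{dense}: once one knows that $D_{00}$ contains at least two vertices in all but a few rows and columns of $N_{00}$, a short computation with Observation \ref{obs: adj} shows that the private neighbor of any vertex of $D_{10}$ (resp.\ $D_{01}$) cannot lie in $N_{00}$, $N_{10}$, or $N_{01}$, and so must lie in $N_{11}$ in the same row (resp.\ column). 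Since a private neighbor of $(1,b,0,d)$ in $N_{11}$ is some $(1,b',1,d)$ dominated by no other member of $D$, at most two vertices of $D_{10}$ can occupy any single row (three would force one of their private neighbors to be dominated by another), and symmetrically for $D_{01}$ in any column.

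The heart of the argument, and the step I expect to be the main obstacle, is to bound the number of rows met by $D_{10}$ plus the number of columns met by $D_{01}$ by $10$; since each row met by $D_{10}$ places nearly all of that row of $N_{11}$ into $T$, and likewise for columns, this yields $|T|\le 10n$. Here I would combine two ingredients: the private neighbors of distinct members of $D$ are themselves distinct, so they inject into $V(G_n\times G_n)$; and, by the previous paragraph, the private neighbors of the $n^2-|T|$ good members of $D_{00}$ together with those of $D_{10}$ and $D_{01}$ all land in $N_{11}$, whose capacity is $n^2$. Feeding in the hypothesis $|D|>n^2$ then forces $D_{10}$ and $D_{01}$ to be confined to boundedly many rows and columns, and ruling out the remaining spread-out configurations is the delicate extremal part, presumably where the threshold $n\ge 71$ enters. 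A mild bootstrap is needed to break the apparent circularity (density of $D_{00}$ is used to locate private neighbors, yet density is part of the conclusion): one first extracts a weak lower bound on $|D_{00}|$ directly from $(\Z_n\times\Z_n)\setminus T\subseteq S$, upgrades it to the density required above, and only then derives $|T|\le 10n$.

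Finally, the second assertion follows cheaply from the first. Once $|T|\le 10n$ we obtain $|D_{00}|=|S|\ge n^2-|T|\ge n^2-10n\ge 2n+1$ for $n\ge 71$. For any $w=(0,b,0,d)\in N_{00}$, at most $2n-1$ members of $D_{00}$ lie in the column of $b$ or the row of $d$, so at least two members of $D_{00}$ lie off both; by the adjacency rule for two vertices of $N_{00}$ (different row and column) in Observation \ref{obs: adj}, each of these is adjacent to $w$, so $w$ has at least two dominators in $D$ and hence cannot be the private neighbor of any vertex. This shows that no vertex of $N_{00}$ is a private neighbor, completing the lemma.
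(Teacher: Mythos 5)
Your reformulation of the first assertion is correct and clean: since $D_{11}=\emptyset$, the vertex $(1,b,1,d)$ can only be dominated by its corresponding vertex in $D_{00}$, by vertices of $D_{10}$ in its row, and by vertices of $D_{01}$ in its column, so the members of $D_{00}$ with a private neighbor in $N_{11}$ are exactly those indexed by $(\Z_n\times\Z_n)\setminus T$, and the first claim is equivalent to $|T|\le 10n$. Your derivation of the second assertion from the first is also correct and matches the paper. The gap is in the middle step. The claim that ``the private neighbor of any vertex of $D_{10}$ (resp.\ $D_{01}$) cannot lie in $N_{00}$, $N_{10}$, or $N_{01}$, and so must lie in $N_{11}$'' is false: density of $D_{00}$ only \emph{confines} private neighbors in $N_{10}$ to the few columns where $D_{00}$ is sparse, it does not eliminate them. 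Indeed, Lemma \ref{lem: c4} explicitly allows up to $19$ private neighbors of $D_{10}$ in $N_{01}$, and the contradiction in Case (\ref{c}) of the proof of Theorem \ref{thm: upper dom} is reached precisely by exhibiting at least $n-60$ private neighbors of $D_{10}$ lying in $N_{10}$ itself. Consequently the capacity argument (injecting all these private neighbors into $N_{11}$) and the deduction that $D_{10}$ meets each row in at most two vertices both collapse. Separately, the bootstrap you propose to break the circularity does not get started: $(\Z_n\times\Z_n)\setminus T\subseteq S$ gives $|D_{00}|\ge n^2-|T|$, which is vacuous without a prior bound on $|T|$, and $|D|>n^2$ alone does not force $|D_{00}|$ to be large.

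The paper's argument sidesteps all of this and needs neither density of $D_{00}$ nor any control on where the private neighbors of $D_{10}\cup D_{01}$ lie. By minimality every member of $D$ has a private neighbor, and distinct members have distinct private neighbors; one then bounds, region by region, how many members of $D$ can have a private neighbor there. Since each of $D_{00},D_{10},D_{01}$ is nonempty and a single vertex of $D_{ij}$ dominates all but $2n-1$ vertices of $N_{ij}$, at most $2n$ members of $D$ have a private neighbor in each of $N_{00},N_{10},N_{01}$, i.e.\ at most $6n$ in total. For $N_{11}$: a row of $N_{10}$ together with the corresponding row of $N_{11}$ induces $K_2\times K_n$, so two vertices of $D_{10}$ in one row already dominate the entire corresponding row of $N_{11}$; hence at most $2n$ vertices of $D_{10}$, and likewise at most $2n$ of $D_{01}$, have a private neighbor in $N_{11}$. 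As $D_{11}=\emptyset$, the remaining at least $|D|-10n>n^2-10n$ members of $D$ must lie in $D_{00}$ with private neighbor in $N_{11}$. If you wish to keep your formulation, note that this count \emph{yields} $|T|\le 10n$ through your equivalence; the implication you need runs in that direction, not from a separately established bound on the rows and columns met by $D_{10}$ and $D_{01}$.
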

\begin{proof}
Note that the closed neighborhood of each vertex in $N_{ij}$ contains $n^2 - 2n + 1$ vertices of $N_{ij}$.  Since $D_{00}$, $D_{01}$, and $D_{10}$ are all nonempty, then there are at most $6n$ vertices of $D$ with a private neighbor in $N_{00} \cup N_{10} \cup N_{01}$.  By Observation \ref{obs: adj}, edges between $N_{10}$ and $N_{11}$ must connect vertices in corresponding rows, which induce a $K_2 \times K_n$ subgraph.  Thus the neighborhood of any two vertices in the same row of $N_{10}$ will contain all vertices in the corresponding row of $N_{11}$. Hence in each row of $D_{10}$, there are at most $2$ vertices with a private neighbor in $N_{11}$, and similarly for columns of $D_{01}$.  So there are at most $4n$ vertices of $D_{10} \cup D_{01}$ with a private neighbor in $N_{11}$.  The remaining vertices of $D$, of which there are at least $n^2 - 10n$, must be members of $D_{00}$ with a private neighbor in $N_{11}$.

It remains to prove the second claim.  By Observation \ref{obs: adj}, each vertex of $N_{00}$ is adjacent to all but $2n-1$ vertices of $N_{00}$.  Using our lower bound on the size of $D_{00}$ and our assumption that $n \geq 71$, we have $|D_{00}| \geq n^2 - 10n > 2n + 1$.  Thus at least two vertices of $D_{00}$ are adjacent to each vertex of $N_{00}$, so no vertex in $N_{00}$ is a private neighbor of a vertex in $D$.
\end{proof}

\begin{lem}\label{lem: c3}
Suppose $D$ is a minimal dominating set of $G_n \times G_n$ of size greater than $n^2$ and the conditions of Case (\ref{c}) hold.  Then
$$|D \cut D_{00}| = |D_{10}| + |D_{01}| > 2n - 43\,.$$
\end{lem}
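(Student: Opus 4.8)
The plan is to bound $|D_{00}|$ from above. Since Case~(\ref{c}) gives $D_{11}=\emptyset$, we have $|D_{10}|+|D_{01}|=|D|-|D_{00}|$. Set $Q=N_{00}\setminus D_{00}$, so that $|D_{00}|=n^2-|Q|$, and since $|D|>n^2$ we obtain $|D_{10}|+|D_{01}|=(|D|-n^2)+|Q|>|Q|$. Hence it suffices to exhibit at least $2n-43$ ``holes'', i.e.\ to show $|Q|\ge 2n-43$; in fact I expect to find about $2n$ of them.

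I would first use minimality together with the nonemptiness of $D_{10}$ guaranteed by Case~(\ref{c}). Fix $w=(1,\beta,0,\delta)\in D_{10}$. By Observation~\ref{obs: adj}, $w$ is adjacent to $(1,b,1,\delta)\in N_{11}$ for every $b\neq\beta$ (these lie in corresponding rows but are not corresponding vertices). Thus for any $x=(0,b,0,\delta)\in D_{00}$ with $b\neq\beta$, the vertex $(1,b,1,\delta)$ is dominated both by $x$ and by $w$, so it is \emph{not} a private neighbor of $x$. As $D$ is minimal, $x$ has a private neighbor, and by Lemma~\ref{lem: c1} it does not lie in $N_{00}$; by Observation~\ref{obs: adj} its only remaining locations are the $b$-column of $N_{10}$ or the $\delta$-row of $N_{01}$. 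Examining which vertices of $D_{00}$ dominate such a candidate shows that a private neighbor in the $b$-column of $N_{10}$ forces the $b$-column of $D_{00}$ to meet at most two rows, while a private neighbor in the $\delta$-row of $N_{01}$ forces the $\delta$-row of $D_{00}$ to meet at most two columns. Either way, one of these lines of $D_{00}$ is \emph{nearly empty}, contributing at least $n-2$ holes.

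This yields a dichotomy producing two nearly-empty lines. If the $\delta$-row of $D_{00}$ contains at least three vertices, then at least two of them sit in columns $b\neq\beta$; by the previous step each such $b$-column of $D_{00}$ is nearly empty, and two distinct nearly-empty columns give at least $2(n-2)$ holes. Otherwise the $\delta$-row of $D_{00}$ is itself nearly empty, and I would apply the same argument to a fixed $w'=(0,\beta',1,\delta')\in D_{01}$ with the roles of rows and columns (and of $N_{10}$ and $N_{01}$) interchanged: this makes either the $\beta'$-column of $D_{00}$ nearly empty or at least two rows of $D_{00}$ nearly empty. In each branch $D_{00}$ has two distinct nearly-empty lines, and since two such lines share at most one cell they contain at least $2(n-2)-1=2n-5$ holes. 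Thus $|Q|\ge 2n-5>2n-43$, which by the first paragraph gives the claim.

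The crux is the private-neighbor computation in the second paragraph. One must check, using the exact adjacencies of Observation~\ref{obs: adj}, that a candidate private neighbor really does pin down an entire line of $D_{00}$: for instance $(1,b,0,d')\in N_{10}$ is dominated by precisely the vertices $(0,b,0,d'')\in D_{00}$ with $d''\neq d'$, so if it is private to $x=(0,b,0,\delta)$ then the $b$-column of $D_{00}$ can only meet rows $d'$ and $\delta$. The remaining subtleties are purely bookkeeping: accounting for the exceptional column $\beta$ (resp.\ row $\delta'$), which is why the threshold is ``at most two'' rather than ``at most one'', and the single possible overlap between the two lines. These losses are tiny compared with the slack between $2n-5$ and $2n-43$, so the bound is not delicate once the line-emptiness step is established.
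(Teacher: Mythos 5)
Your proof is correct, but it takes a genuinely different route from the paper's. The paper bounds $|D_{00}|$ from above by a census of private neighbors: every element of $D_{00}$ needs one, none lies in $N_{00}$ (Lemma \ref{lem: c1}), at most $n^2-(2n-3)$ elements can have one in $N_{11}$ (a single vertex of $D_{10}$ and a single vertex of $D_{01}$ already cover $2n-3$ vertices of $N_{11}$), and at most $20+20$ can have one in $N_{01}\cup N_{10}$ --- this last count relying on the quantitative half of Lemma \ref{lem: c1}, namely $|D_{00}|\ge n^2-10n$, to show that at most $10$ rows and $10$ columns of $D_{00}$ are sparse. You instead lower-bound $|N_{00}\setminus D_{00}|$ directly: a single $w=(1,\beta,0,\delta)\in D_{10}$ removes the $N_{11}$ option for every $D_{00}$-vertex of the $\delta$-row outside column $\beta$, the forced private neighbor in $N_{10}\cup N_{01}$ then pins an entire line of $D_{00}$ down to at most two cells, and your dichotomy together with the symmetric argument for $w'\in D_{01}$ always yields two nearly-empty lines, hence at least $2(n-2)-1=2n-5$ holes; the reduction $|D_{10}|+|D_{01}|=(|D|-n^2)+|N_{00}\setminus D_{00}|>|N_{00}\setminus D_{00}|$ is valid because $D_{11}=\emptyset$. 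I checked the adjacency computations against Observation \ref{obs: adj} and they are exactly right, including the key point that a vertex of $N_{00}$ has a unique neighbor in $N_{11}$ and that a candidate private neighbor $(1,b,0,d')$ is dominated by precisely the $b$-column of $N_{00}$ minus row $d'$. What your approach buys: it uses only the qualitative second claim of Lemma \ref{lem: c1} (no private neighbors in $N_{00}$), not the $n^2-10n$ estimate, and it gives the sharper bound $2n-5$ in place of $2n-43$; the paper's census is quicker to state but looser in the constants.
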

\begin{proof}
Suppose not, so $|D_{00}| > n^2 - (2n - 43)$.  By Lemma \ref{lem: c1}, no member of $D_{00}$ has a private neighbor in $N_{00}$.  Since $D_{10}$ and $D_{01}$ are nonempty, at least $2n - 3$ vertices of $N_{11}$ are in the neighborhood of $D_{10} \cup D_{01}$.  Thus, at most $n^2 - (2n - 3)$ members of $D_{00}$ have a private neighbor in $N_{11}$.  Observe that at most $10$ rows (resp. columns) of $N_{00}$ contain at most $2$ members of $D_{00}$, as $|N_{00} \cut D_{00}| \leq 10n < 11(n - 2)$ for $n \geq 71$.  Thus at most $20$ members of $D_{00}$ have a private neighbor in $N_{01}$, and similarly at most $20$ members of $D_{00}$ have a private neighbor in $N_{10}$.  Therefore there at most $n^2 - (2n - 3) + 40 = n^2 - (2n - 43)$ vertices in $D_{00}$.
\end{proof}

\begin{lem}\label{lem: c4}
Suppose $D$ is a minimal dominating set of $G_n \times G_n$ of size greater than $n^2$ and the conditions of Case (\ref{c}) hold. Then the elements of $D_{01}$ (resp. $D_{10}$) span at most $10$ columns (resp. rows).  Moreover, the private neighbors of $D$ in $N_{01}$ (resp. $N_{10}$) span at most $10$ rows (resp. columns). Hence there are at most $19$ private neighbors of $D_{01}$ (resp. $D_{10}$) in $N_{10}$ (resp. $N_{01}$).
\end{lem}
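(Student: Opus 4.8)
The plan is to treat the three assertions in turn, using throughout the adjacency dictionary of Observation~\ref{obs: adj} together with the structural input already secured: by Lemma~\ref{lem: c1} we have $|D_{00}| \ge n^2 - 10n$, at least $n^2 - 10n$ members of $D_{00}$ have their unique private neighbor at the corresponding vertex in $N_{11}$, and no vertex of $N_{00}$ is private; and, as in the proof of Lemma~\ref{lem: c3}, at most $10$ rows and at most $10$ columns of $N_{00}$ contain fewer than three members of $D_{00}$.

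\emph{Column span of $D_{01}$.} Fix a vertex $(0,b,0,d) \in D_{00}$ whose private neighbor is the corresponding vertex $(1,b,1,d) \in N_{11}$. By Observation~\ref{obs: adj}(c), the neighbors of $(1,b,1,d)$ inside $N_{01}$ are exactly the vertices of column $b$ in rows different from $d$; privacy therefore forbids any member of $D_{01}$ in column $b$ off row $d$. Consequently, if a single column $b$ contains two such members of $D_{00}$, in distinct rows, then $D_{01}$ must avoid column $b$ entirely. Since the $\ge n^2 - 10n$ members of $D_{00}$ with private neighbors in $N_{11}$ are distributed among $n$ columns, all but at most $10$ columns contain two or more of them; $D_{01}$ avoids every such column, hence meets at most $10$ columns. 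The statement for $D_{10}$ is the mirror image, with rows replacing columns and Observation~\ref{obs: adj}(b) replacing (c).

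\emph{Span of the private neighbors in $N_{01}$.} Let $p = (0, b_p, 1, d_p)$ be a private neighbor of $D$ lying in $N_{01}$. As $p$ is dominated by a unique member of $D$ while, by Observation~\ref{obs: adj}(b), every member of $D_{00}$ in row $d_p$ and a column other than $b_p$ is adjacent to $p$, row $d_p$ of $D_{00}$ carries at most one vertex off column $b_p$, hence at most two in all. Thus $d_p$ is among the $\le 10$ sparse rows of Lemma~\ref{lem: c3}, so these private neighbors occupy at most $10$ rows; the $N_{10}$ case is symmetric, giving at most $10$ columns via Observation~\ref{obs: adj}(c).

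\emph{Counting private neighbors of $D_{01}$ in $N_{10}$.} Write $P$ for this set. By Observation~\ref{obs: adj}(d) every $p \in P$ is the corresponding vertex $(1,b,0,d)$ of some $(0,b,1,d) \in D_{01}$, so the columns met by $P$ are among the $\le 10$ columns of $D_{01}$. The decisive observation is that, by Observation~\ref{obs: adj}(a), a vertex of $D_{10}$ dominates $(1,b,0,d)$ unless it lies in row $d$ or column $b$; hence $(1,b,0,d) \in P$ forces $D_{10} \subseteq (\text{row } d) \cup (\text{column } b)$. Because $D_{10} \ne \emptyset$, two members of $P$ in different columns would confine $D_{10}$ to the empty intersection of two columns, so at most one column $b^*$ contains two or more members of $P$, while each of the other ($\le 10$) columns contains at most one. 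This yields $|P| \le |P \cap \text{column } b^*| + 9$, and it remains to bound the special column's contribution by $10$. There $D_{10} \subseteq \text{column } b^*$ and, by privacy, column $b^*$ of $D_{00}$ is empty; moreover each vertex of $D_{00}$ lying in a row occupied by $D_{10}$ has its $N_{11}$-private neighbor blocked by $D_{10}$ (Observation~\ref{obs: adj}(b)), so by minimality those rows of $D_{00}$ must be sparse. I would combine this with the size hypothesis $|D| > n^2$ and Lemma~\ref{lem: c3}: the forced sparsity caps $|D_{00}|$, while outside $b^*$ the members of $D_{01}$ cannot acquire private neighbors in $N_{10}$ (they are dominated there by $D_{10}$) and so each such column carries only a bounded number of them; balancing these counts against $|D| > n^2$ forces the special column to be short, giving $|P \cap \text{column } b^*| \le 10$ and hence $|P| \le 19$. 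I expect this final balancing act—playing the size lower bound against the minimality constraints concentrated in the special column—to be the crux of the argument, the first three parts being essentially bookkeeping built on Observation~\ref{obs: adj}.
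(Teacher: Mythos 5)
Your first two assertions are handled correctly and in essentially the paper's way: the column bound on $D_{01}$ follows, as in the paper, from the Lemma~\ref{lem: c1} count of $n^2-10n$ private neighbors in $N_{11}$ (you phrase it via privacy of those corresponding vertices excluding $D_{01}$ from columns, the paper phrases it by counting the $k(n-1)\le 10n$ neighbors of $k$ column-distinct members of $D_{01}$ in $N_{11}$ --- same content), and the row bound on private neighbors in $N_{01}$ via the at most $10$ sparse rows of $D_{00}$ is exactly the paper's argument.

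The third assertion, however, is where your proposal breaks down, and in two ways. First, the step ``two members of $P$ in different columns would confine $D_{10}$ to the empty intersection of two columns'' is false: if $p_1=(1,b_1,0,d_1)$ and $p_2=(1,b_2,0,d_2)$ both avoid $N[D_{10}]$, then $D_{10}\subseteq(\text{row }d_1\cup\text{col }b_1)\cap(\text{row }d_2\cup\text{col }b_2)$, which equals all of row $d_1$ when $d_1=d_2$ and equals the two-point set $\{(1,b_2,0,d_1),(1,b_1,0,d_2)\}$ when $d_1\neq d_2$; it is never forced to be empty, so your conclusion that at most one column carries two members of $P$ does not follow. Second, even granting that structure, the ``balancing act'' you propose to bound the special column's contribution is only announced, not executed --- the bound $|P\cap\text{column }b^*|\le 10$ is never derived, so the constant $19$ is not established. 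The paper closes this part differently and more cheaply: it combines the localization of such private neighbors to at most $10$ rows and at most $10$ columns (coming from the first two claims) with the observation that, since $D_{10}\neq\emptyset$, the vertices of $N_{10}$ outside $N[D_{10}]$ lie in the union of a single row and a single column; intersecting these constraints gives at most $2\cdot 10-1=19$ vertices. You should replace your concluding paragraph with an argument of this type (or otherwise supply the missing bound on the special column) before the lemma can be considered proved.
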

\begin{proof}
By Lemma \ref{lem: c1}, at least $n^2 - 10n$ vertices of $N_{11}$ are private neighbors of elements of $D_{00}$.  Thus at most $10n$ vertices of $N_{11}$ are adjacent to vertices of $D_{01}$.  Given $v_1,\dots,v_k \in D_{01}$, each in distinct columns, observe that $k(n-1)$ vertices of $N_{11}$ are in the neighborhood of $\{v_1,\dots,v_k\}$.  Thus $k(n-1) \leq 10n$, and as $n \geq 71$, we have $k \leq 10$. A similar counting argument shows the vertices of $D_{10}$ span at most 10 rows.  

As discussed in the proof of the previous lemma, at most $10$ rows of $N_{00}$ contain at most $2$ vertices of $D_{00}$.  Thus at most $10$ rows of $N_{01}$, namely those corresponding to a row of $N_{00}$ containing at most $2$ vertices of $D_{00}$, contain a vertex which is not adjacent to two vertices of $D_{00}$.  Thus the private neighbors of $D$ in $N_{01}$ span at most $10$ rows.  An analogous argument shows that the private neighbors of $D$ in $N_{10}$ span at most $10$ columns.

It remains to prove the last claim.  Given $v \in D_{01}$, note that its only neighbor of in $N_{10}$ is its corresponding vertex. By the previous claims, there are at most $10$ rows and $10$ columns in which a private neighbor in $N_{10}$ of a vertex $v \in D_{01}$ could lie. Moreover, since $N_{10}$ is nonempty, there is at most $1$ row and $1$ column of $N_{10}$ is not in the closed neighborhood of $D_{10}$.  Combining these restrictions shows that at most $2\cdot 10 - 1 = 19$ vertices of $N_{10}$ can be private neighbors of vertices in $D_{01}$. This argument proceed analogously swapping the roles of $D_{10}$ and $D_{01}$ as well as $N_{10}$ and $N_{01}$.
\end{proof}

\begin{proof}[Proof of Theorem \ref{thm: upper dom}]
Let $D$ be a minimal dominating set of $G_n \times G_n$ of size greater than $n^2$.  We proceed by considering the four cases discussed above, reaching a contradiction in each: 
\begin{enumerate}
    \item[{(\ref{a})}:]  Assume that the classes $D_{00}$ and $D_{11}$ are nonempty and the remaining classes are empty.  Note that there are at most $2n$ private neighbors of $D$ in each of $N_{00}$ and $N_{11}$, since the closed neighborhood any vertex in $N_{00}$ (resp. $N_{11}$) contains all but $2n-1$ vertices of $N_{00}$ (resp. $N_{11}$).  We next claim that there are at most $2n$ private neighbors of $D_{00}$ in $N_{01}$. Observe that any two vertices in the same column of $N_{01}$ have neighborhoods in $N_{00}$ whose setwise difference contains precisely one vertex.  That is, if any three vertices of $D_{00}$ are adjacent to a vertex $v \in N_{01}$, then every vertex in the same row as $v$ in $N_{01}$ is adjacent to at least two vertices of $D_{00}$.  Hence there are at most two private neighbors of $D_{00}$ in any row of $N_{01}$.  Analogous arguments show that there are at most $2n$ private neighbors of $D_{00}$ in $N_{10}$ and at most $4n$ private neighbors of $D_{11}$ in $N_{01} \cup N_{10}$.   Therefore there are at most $12n$ private neighbors of $D$ in total.  Since we assume $n \geq 71$, we have $|D| \leq 12n < n^2$.
    \item[{(\ref{b})}:] Assume that the classes $D_{00}$ and $D_{01}$ are nonempty and the remaining classes are empty.   Let 
\begin{align*}
    W(b_0) &= \{(c,d) \in \Z_2 \times \Z_n : \exists a \in \Z_2 \text{ such that } (a,b_0,c,d) \in D\}\\
    &= \{(c,d) \in \Z_2 \times \Z_n : (0,b_0,c,d) \in D\}\,.
\end{align*}
The set $W(b_0)$ can be viewed as the projection of the $b_0$-columns of $D$ onto the last two coordinates.  Observe that if $(0,b,c,d) \in D$ is a neighbor of the vertex $(1,b_0,c_0,d_0)$, then $b = b_0$.  Since every vertex of the latter form must be in the neighborhood of $D$, $W(b_0)$ must be a total dominating set in $G_n$ (using the correspondence of the vertex set with $\Z_2 \times \Z_n$).  Observe then that 
$$\bigcup_{b_0 \in \Z_n} \{(0,b_0)\} \times W(b_0) \subseteq D$$
is a dominating set of $G_n \times G_n$, hence is equal to $D$ by our assumption that $D$ is a minimal dominating set.  Moreover, by the minimality of $D$, each $W(b_0)$ should be a minimal dominating set of $G_n$.  It is straightforward to check that a minimal total dominating set in $G_n$ has size $2$, $4$, or $n$.  Thus, 
$$|D| = \sum_{b_0 = 0}^{n-1} |W(b_0)| \leq \sum_{b_0 = 0}^{n-1} n = n^2.$$
    \item[{(\ref{c})}:]
    Assume that the classes $D_{00}$, $D_{10}$, and $D_{01}$ are nonempty and that $D_{11}$ is empty. 
 Without loss of generality, by Lemma \ref{lem: c3}, we can assume $|D_{10}| \geq n - 21$.  Lemma \ref{lem: c1} implies that no member of $D_{10}$ has a private neighbor in $N_{00}$.  Moreover, $D_{10}$ spans at most $10$ columns by Lemma \ref{lem: c4} and each column of $D_{10}$ has at most $2$ vertices with private neighbors in $N_{11}$.  Hence there are at most $20$ private neighbors of $D_{10}$ in $N_{11}$.  By Lemma \ref{lem: c4}, the vertices of $D_{10}$ have at most $19$ private neighbors in $N_{01}$.  Therefore, there must be at least $(n - 21) - 20 - 19 = n - 60$ private neighbors of $D_{10}$ in $N_{10}$.  

Since $n \geq 71$, then there are at least $11$ members of $D_{10}$ with a private neighbor in $N_{10}$.  Note that all such vertices must lie either in the same row or the same column by Observation \ref{obs: adj}.  However, the first claim of Lemma \ref{lem: c4} implies that these vertices cannot lie in the same row, lest they span $11$ columns.  The second claim of Lemma \ref{lem: c4} implies that they cannot lie in the same column, lest their private neighbors (namely, the vertices themselves) span $11$ rows.  Therefore no such minimal dominating set $D$ can exist.
\item[{(\ref{d})}:] Assume that all four of the classes $D_{ij}$ are nonempty.  Fix a vertex $v_{ij} \in D_{ij}$ for each $(i,j) \in \Z_2 \times \Z_2$.  Since all but $2n-1$ vertices of $D_{ij}$ are in the closed neighborhood of $v_{ij}$, we have $V(H) \setminus N[\{v_{00},v_{01},v_{10}, v_{11}\}] \leq 8n-4$.  Thus by the minimality of $D$ and our assumption that $n \geq 71$, we have $|D| \leq 8n \leq n^2$. \qedhere 
\end{enumerate}
\end{proof}

\section{Further Directions}\label{sec: further}

As described in Lemma \ref{lem: match dom prod com}, the paired domination number exhibits additive behavior on certain direct products of complete graphs.  In particular, this result implies that there are infinitely many graphs $G$ and $H$ such that $\gamma_{\mathrm{pr}}(G \times H) = \gamma_{\mathrm{pr}}(G) + \gamma_{\mathrm{pr}}(H) - 1$.  For the domination number, Meki{\v s} \cite[Theorem 3.1]{Mek} proved the additive lower bound $\gamma(G \times H) \geq \gamma(G) + \gamma(H) - 1$.  Using the bound $\gamma_{\mathrm{pr}}(G) \leq 2\gamma(G)$ (see \cite[Theorem 9]{HS}), we obtain $\gamma_{\mathrm{pr}}(G \times H) \geq \frac{1}{2}\left( \gamma(G) + \gamma(H)\right) - 1$.  This motivates the following question,

\begin{ques}
What is the largest constant $c \in \left[\frac{1}{2}, 1\right]$ such that 
$$\gamma_{\mathrm{pr}}(G \times H) \geq c\left( \gamma_{\mathrm{pr}}(G) + \gamma_{\mathrm{pr}}(H)\right) - 1$$
holds for all graphs $G$ and $H$?
\end{ques}

Corollary \ref{cor: tree prod} asserts that the inequality $\gamma_{\mathrm{pr}}(T_1 \times T_2) \geq \frac{1}{2}\gamma_{\mathrm{pr}}(T_1)\gamma_{\mathrm{pr}}(T_2)$ holds for any trees $T_1$ and $T_2$.  As discussed in Section \ref{Paired Dom}, Bre{\v s}ar, Klav{\v z}ar, and Rall \cite{BKR} showed that the constant factor $\frac{1}{2}$ is tight here by examining these parameters on the subdivided stars. It is yet not known whether equality can be achieved, but the author conjectures that this is not the case.

\begin{conj}
For any trees $T_1$ and $T_2$ of order at least $2$, we have
$$\gamma_{\mathrm{pr}}(T_1 \times T_2) > \frac{1}{2} \gamma_{\mathrm{pr}}(T_1)\gamma_{\mathrm{pr}}(T_2)\,.$$
\end{conj}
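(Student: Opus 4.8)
The plan is to upgrade the proof of Corollary \ref{cor: tree prod} to a strict inequality by pinpointing where its constituent inequalities can be made strict. For trees the chain
$$\gamma_{\mathrm{pr}}(T_1 \times T_2) \geq 2\rho_3(T_1 \times T_2) \geq 2\rho_3(T_1)\rho_3(T_2) = \tfrac{1}{2}\gamma_{\mathrm{pr}}(T_1)\gamma_{\mathrm{pr}}(T_2)$$
combines Lemma \ref{lem: pr vs rho3}, Observation \ref{obs: packing product}, and Theorem \ref{thm: tree rho}. Hence equality in the conjecture would force \emph{simultaneous} equality in $\gamma_{\mathrm{pr}}(T_1 \times T_2) = 2\rho_3(T_1 \times T_2)$ and in $\rho_3(T_1 \times T_2) = \rho_3(T_1)\rho_3(T_2)$. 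It therefore suffices to show that, for trees of order at least $2$, at least one of these two equalities must fail. My primary target is the second, namely strict super-multiplicativity $\rho_3(T_1 \times T_2) > \rho_3(T_1)\rho_3(T_2)$.

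The key structural input is bipartiteness. Since $T_1$ and $T_2$ are connected and bipartite, $T_1 \times T_2$ has exactly two connected components $C_{\mathrm{even}}$ and $C_{\mathrm{odd}}$, determined by the parity of the combined color, so that $\rho_3(T_1 \times T_2) = \rho_3(C_{\mathrm{even}}) + \rho_3(C_{\mathrm{odd}})$ with the two summands optimized \emph{independently}. First I would establish that within a component
$$d_{T_1 \times T_2}\big((u,v),(u',v')\big) = \max\big(d_{T_1}(u,u'),\, d_{T_2}(v,v')\big),$$
which holds for trees because between any two vertices every walk has a fixed parity and length at least the distance, so a common walk length exists exactly when the coordinate distances agree in parity, and then the shortest is their maximum. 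Consequently a vertex set is a $3$-packing of $T_1 \times T_2$ if and only if any two of its members are at $T_1$-distance at least $4$ or at $T_2$-distance at least $4$. The product $P \times Q$ of global maximum packings splits across the two components according to color, and it is precisely the freedom to use \emph{different} packings in the two components that can push $\rho_3(T_1 \times T_2)$ above the baseline $\rho_3(T_1)\rho_3(T_2)$ supplied by Observation \ref{obs: packing product}.

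To exploit this I would track, for the bipartition $A_i \sqcup B_i$ of $T_i$, the quantities $a_i$ and $b_i$, the largest sizes of a $3$-packing of $T_i$ contained in $A_i$ and in $B_i$ respectively. Placing a color-separated product packing in each component yields the alternative bound
$$\rho_3(T_1 \times T_2) \geq \max(a_1 a_2,\, b_1 b_2) + \max(a_1 b_2,\, b_1 a_2).$$
When some tree admits a maximum packing concentrated in a single color class — as the subdivided stars witnessing tightness of Corollary \ref{cor: tree prod} do — this bound strictly exceeds the baseline, because the opposite color class is nonempty and contributes a strictly positive cross term; this settles the ``concentrated'' trees.

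The main obstacle is the complementary ``balanced'' regime, in which every maximum $3$-packing of $T_i$ meets both color classes substantially. There the color-separated bound can actually fall \emph{below} $\rho_3(T_1)\rho_3(T_2)$ while the baseline product packing is only tight, so neither standard construction certifies strictness. In this regime one must instead produce genuinely non-product $3$-packings inside a single component of $T_1 \times T_2$ — for instance by enlarging a leaf packing with a far-away vertex of the opposite color, a gain that does exist but is invisible to the product construction — or else abandon the packing route for that case and analyze the equality condition of Lemma \ref{lem: pr vs rho3}, showing $\gamma_{\mathrm{pr}}(T_1 \times T_2) > 2\rho_3(T_1 \times T_2)$ directly by arguing that a perfect matching on a dominating set cannot be packed as tightly as twice a maximum $3$-packing. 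Unifying these mechanisms into one argument valid for every pair of trees, rather than an ad hoc split between the two sources of strictness, is the genuine difficulty, and is why the statement appears here only as a conjecture.
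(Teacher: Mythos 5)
First, note that the paper does not prove this statement at all: it is posed as an open conjecture (the author explicitly says it is not known whether equality can occur), so there is no proof of record to compare against, and a complete argument would settle an open problem. Your structural groundwork is sound: the reduction of a hypothetical equality to simultaneous equality in Lemma \ref{lem: pr vs rho3} and Observation \ref{obs: packing product}, the two-component decomposition of $T_1 \times T_2$ (Weichsel), the distance formula $d_{T_1\times T_2}\bigl((u,v),(u',v')\bigr) = \max\bigl(d_{T_1}(u,u'), d_{T_2}(v,v')\bigr)$ within a component (correct for trees of order at least $2$, since the set of walk lengths between two vertices is exactly the distance plus even increments), and the resulting color-separated bound $\rho_3(T_1 \times T_2) \geq \max(a_1a_2, b_1b_2) + \max(a_1b_2, b_1a_2)$ are all valid. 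But the proposal is not a proof, and the gap is larger than the one you acknowledge.

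Concretely, the case you claim to have settled is not settled. Suppose $T_1$ has a maximum $3$-packing inside $A_1$, so $a_1 = \rho_3(T_1)$, while $T_2$ is balanced. Your bound gives $\rho_3(T_1\times T_2) \geq a_1a_2 + a_1b_2 = \rho_3(T_1)(a_2+b_2)$, and the ``strictly positive cross term'' $a_1b_2 \geq 1$ yields strictness over the baseline $\rho_3(T_1)\rho_3(T_2)$ only if $a_2 + b_2 > \rho_3(T_2)$; in general one knows only $a_2+b_2 \geq \rho_3(T_2)$ (split any maximum packing by color), so if $a_2 + b_2 = \rho_3(T_2)$ your bound exactly matches the baseline and certifies nothing. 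Thus the cross-term argument proves strictness only when \emph{both} trees admit monochromatic maximum $3$-packings, not when ``some tree'' does, and your true case split is: both concentrated (done) versus at least one balanced (open). To close the conjecture along these lines you would need either a lemma that every tree of order at least $2$ admits a monochromatic maximum $3$-packing --- a nontrivial claim you neither state nor prove, and the naive fix of shifting a vertex across the bipartition fails because it can shrink an even distance $4$ to a forbidden odd distance $3$ --- or genuinely non-product packings, or strictness in Lemma \ref{lem: pr vs rho3} for the product (which is not a tree, so Theorem \ref{thm: tree rho} does not apply to it). None of these is carried out, so the proposal identifies the right obstacles but does not overcome them.
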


It is established in Theorem \ref{thm: prod subgraph} that any graphs can appear as induced subgraphs of a pair of graphs $G,H$ satisfying $\gamma_{\mathrm{pr}}(G \times H) \geq \frac{1}{2}\gamma_{\mathrm{pr}}(G)\gamma_{\mathrm{pr}}(H)$.  It is possible that the constant factor of $\frac{1}{2}$ can be improved in this statement.  

\begin{ques}
Does there exist $c > \frac{1}{2}$ (and if so, what is the supremum of such $c$) such that for any graphs $G$ and $H$, there exists graphs $G'$ and $H'$ containing $G$ and $H$, respectively, as induced subgraphs such that 
$$\gamma_{\mathrm{pr}}(G' \times H') \geq c \gamma_{\mathrm{pr}}(G')\gamma_{\mathrm{pr}}(H')\,?$$
\end{ques}

By Theorem \ref{thm: intro pair prod}, no multiplicative lower bound holds for the paired domination numbers of direct product graphs.  However, one may be able to characterize the classes of graphs satisfying a particular multiplicative lower bound.  
\begin{prob}
For any fixed $c > 0$, characterize the pairs of graphs $G$ and $H$ satisfying 
$$\gamma_{\mathrm{pr}}(G \times H) > c\gamma_{\mathrm{pr}}(G)\gamma_{\mathrm{pr}}(H)\,.$$
\end{prob}
In particular, it may be interesting to focus on the cases $c = 1$ or $c = \frac{1}{2}$.  One could also study the symmetric version of this question, where $G = H$.

It may also be interesting to characterize the graphs satisfying the equality in Theorem \ref{thm: upper dom}.  Thus far, no families of examples with unbounded upper domination numbers are known other than the $2 \times n$ rook graphs. 

\begin{prob}
Construct other large families of graphs $G$ and $H$ with arbitrarily large upper domination numbers satisfying $\Gamma(G \times H) = \Gamma(G)\Gamma(H)\,.$
\end{prob}

\section{Acknowledgements}
The author was supported by a Marshall scholarship and a St.\ John's College Benefactors' scholarship.

\end{document}